\newtheorem{theorem}{Theorem}[section]
\newtheorem{lemma}[theorem]{Lemma}
\newtheorem{definition}[theorem]{Definition}
\newtheorem{remark}[theorem]{Remark}
\numberwithin{equation}{section}
\newcommand{\R}{\mathbb{R}}
\newcommand{\e}{\varepsilon}
\renewcommand{\epsilon}{\varepsilon}
\renewcommand{\leq}{\leqslant}
\renewcommand{\le}{\leqslant}
\renewcommand{\ge}{\geqslant}
\title{On a Minkowski geometric flow in the plane:
evolution of curves with lack of scale invariance
\thanks{This work has been supported by the Australian
Research Council Discovery Project ``N.E.W. Nonlocal Equations at Work''
and it has been carried out during a very pleasant visit of the first and third
authors at the University of Pisa. We thank the Referees for their very useful comments.}}
\author{Serena Dipierro,
Matteo Novaga and
Enrico Valdinoci}
\date{} 
\begin{document}

\maketitle 

\begin{abstract}
We consider a planar geometric flow in which the normal velocity
is a nonlocal variant of the curvature. The flow is not scaling invariant
and in fact has different behaviors at different spatial scales, thus
producing phenomena that are different with respect to
both the classical mean curvature flow
and the fractional mean curvature flow.

In particular, we give examples of neckpinch singularity formation,
and we discuss convexity properties of the evolution.

We also take into account traveling waves for this geometric flow, showing that
a new family of $C^{1,1}$ and convex traveling sets
arises in this setting. 
\end{abstract}

%\tableofcontents

\section{Introduction}

In this paper we consider a planar geometric
flow and we discuss its basic properties, such as
singularity formation, convexity preserving
and existence of traveling waves.
This geometric flow 
is the gradient flow of a nonlocal perimeter which is not
invariant under scaling, therefore the evolution
of a set presents different properties
at different scales (in this, the flow has natural
applications in image digitalization,
especially when tiny details have to be preserved after denoising,
as in the case of fingerprints storage).
\medskip

The mathematical framework in which we work is the following.
For any set~$E\subset\R^2$ with~$C^2$ boundary
and any~$x\in\partial E$
we denote by~$B_{r,x}^{\mathrm{ext}}$ the ball\footnote{For
consistency,
we take here balls to be open.}
of radius~$r>0$ which is locally externally
tangent to~$\partial E$ at~$x$ (that is, $B_{r,x}^{\mathrm{ext}}:=
B_r(x+r\nu_E(x))$, where~$\nu_E$ is the external unit normal).

Similarly, we denote
by~$B_{r,x}^{\mathrm{int}}$ the ball of radius~$r$ which is locally internally
tangent to~$\partial E$ at~$x$ (that is, $B_{r,x}^{\mathrm{int}}:=
B_r(x-r\nu_E(x))$).

We also denote by~${\kappa(E,x)}$ the curvature of~$\partial E$ at the point~$x$.
Then we define the $r$-curvature of~$\partial E$ at~$x$ as
\begin{equation}\label{KAPPA}
\begin{split}
&\kappa_r(E,x):=\kappa_r^+(E,x)+\kappa_r^-(E,x),\\{\mbox{where }}\quad&
\kappa_r^+(E,x):=\left\{
\begin{matrix}
\displaystyle\frac{\kappa(E,x)}2+\frac{1}{2r} & 
{\mbox{if }} B_{r,x}^{\mathrm{ext}}\subseteq\R^2\setminus E,\\
0&{\mbox{ otherwise,}}
\end{matrix}
\right.
\\{\mbox{and }}\quad&
\kappa_r^-(E,x):=\left\{
\begin{matrix}
\displaystyle\frac{\kappa(E,x)}2-\frac{1}{2r} & 
{\mbox{if }} B_{r,x}^{\mathrm{int}}\subseteq  E,\\
0&{\mbox{ otherwise,}}
\end{matrix}
\right.
\end{split}
\end{equation}
see formulas (2.10), (2.11) and~(2.12) and Lemma~2.1 in~\cite{MR3023439}.

\begin{figure}
    \centering
    \includegraphics[width=12cm]{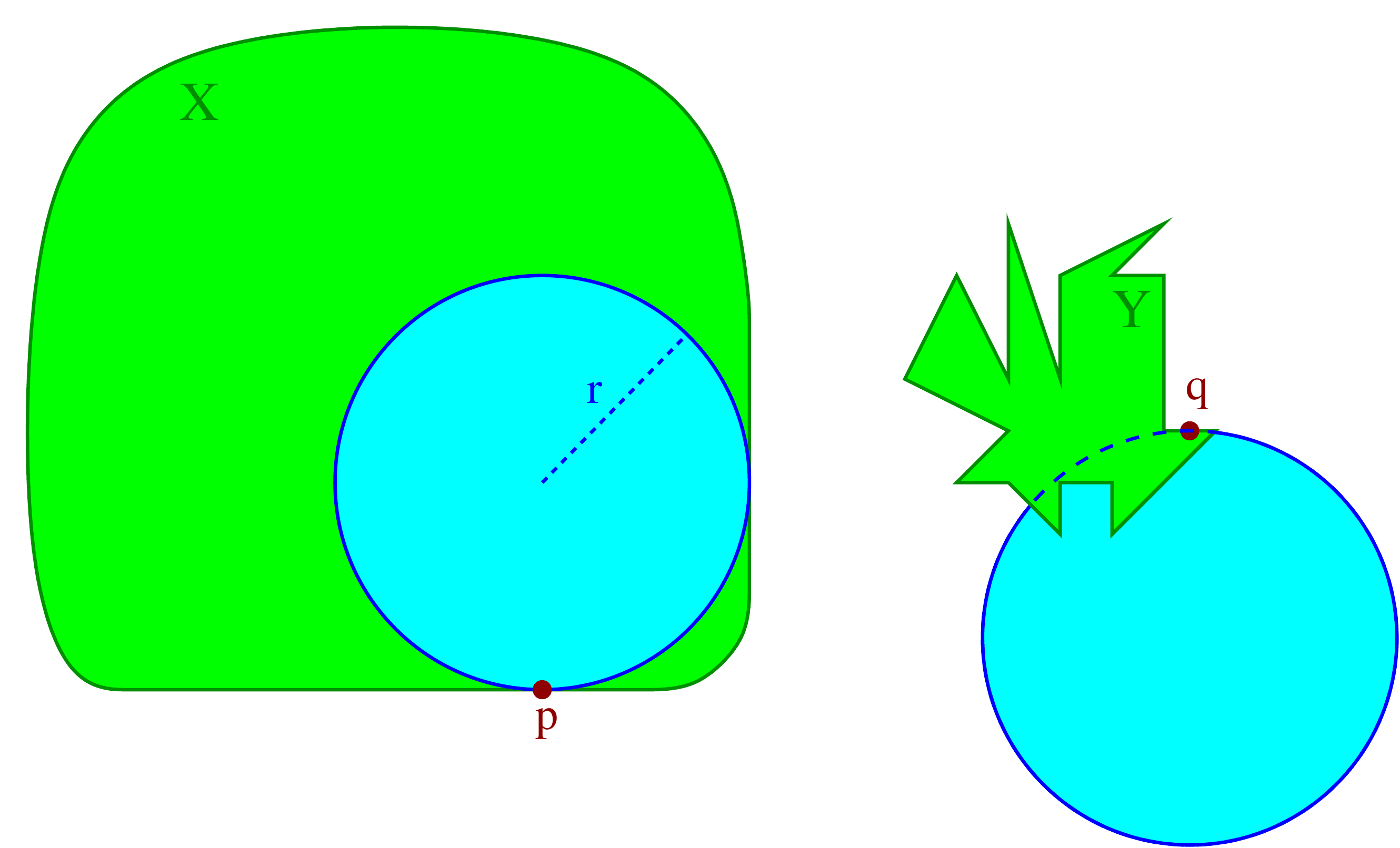}
    \caption{\em {{Examples for~$\kappa_r$.}}}
    \label{C2}
\end{figure}

We point out that,
differently from the case of the classical curvature, there are sets with~$C^2$ boundary for which~$\kappa_r$ is not continuous
(see for instance Figure~\ref{C2}, in which the set~$X$ has~$C^2$
boundary, but $\kappa_r$ jumps from~$0$ to~$\frac1{2r}$ at
the point~$p$, and the set~$Y$, which is not~$C^2$ in an $r$-neighborhood of~$q$,
but for which~$\kappa_r$ is identically zero in such a neighborhood).\medskip

Fixed~$r\in(0,1]$, we consider here the geometric flow with normal velocity equal to~$\kappa_r$,
namely if~$E_t$ denotes the evolution of a set~$E\subset\R^2$
and~$x_t\in\partial E_t$, we study the equation
\begin{equation}
\label{FLOW} \partial_t x_t\cdot\nu_{E_t}(x_t)=-\kappa_r(E_t,x_t). 
\end{equation}
The existence of a solution, in the viscosity sense, of this nonlocal geometric
problem has been established\footnote{As a technical
remark, we point out that in~\cites{MR3023439, MR3156889, MR3401008}
a smoothed version of~$\kappa_r$ (which can be considered
as a nonlocal curvature~$\kappa_f$ depending on a given function~$f$)
is taken into account,
and an existence and uniqueness result is established for this flow.
By approximating~$\chi_{(0,r)}$ with a smooth function~$f$
and taking limits, one could deduce from this the existence
of a viscosity solution for the flow driven by~$ \kappa_r$. For additional
details on this, see the forthcoming Section~\ref{AP}.}
in~\cites{MR3023439, MR3156889, MR3401008}.
The geometric equation in~\eqref{FLOW} can be seen
as the gradient flow of a nonlocal functional built by the approximated Minkowski
content, see~\cites{MR2655948, 2017arXiv170403195C}.

Of course, the quantity~$r$ in~\eqref{FLOW} plays a special
role, producing a discontinuity in the velocity field of the geometric
flow and detecting special features ``at a small scale''.
Therefore, to perform our analysis, we consider a special class
of sets, which are ``slim'' (or ``pudgy'') with respect to such a scale. 

\begin{definition}
A set~$E\subseteq\R^2$ is called ``$r$-pudgy''
if it contains a ball of radius~$r$. Otherwise, it is called ``$r$-slim''.
\end{definition}

A particular case of~$r$-slim sets is given by those which have the ``diameter
in one direction'' that is less than~$r$:

\begin{definition}
A set~$E\subseteq\R^2$ is called ``$r$-thin''
if, after a rigid motion,
it holds that~$E\subseteq \R\times(-\rho,\rho)$, with~$\rho\in(0,r)$.
\end{definition}

The first problem that we take into account is the possible
formation of neckpinch singularities in the flow defined by~\eqref{FLOW}.
We recall that in the classical mean curvature flow (or curve shortening flow),
Grayson's Theorem~\cite{MR906392} gives that no singularity
occurs
in the plane, and, in fact, the
initial set becomes convex and then shrinks smoothly towards a point.
Interestingly, this result is not true for the planar
nonlocal geometric
flow in~\eqref{FLOW} and neckpinch singularities
occur.

We construct two families of counterexamples, one for $r$-thin
and one for~$r$-pudgy sets. The first result is the following:

\begin{theorem}[Neckpinch singularity formation for $r$-thin sets]\label{TH1}
Assume that~$r$ is sufficiently small. Then,
there exists an
$r$-thin
connected set~$E\subset (-1,1)\times\left(-\frac{r}2,\frac{r}2\right)$,
with $C^\infty$ boundary
and such that any
viscosity solution of the $r$-mean curvature flow~\eqref{FLOW} starting from~$E$
does not
shrink to a point (and any viscosity evolution of~$E$ becomes disconnected).

Such~$E$ has a ``narrow dumbbell shape'' in the sense that it is obtained
by gluing two balls of radius~$r/4$ with a neck contained in~$
(-1,1)\times\left(-\frac{r}{100},\frac{r}{100}\right)$.
\end{theorem}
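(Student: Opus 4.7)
The plan is to construct $E$ as a smooth ``narrow dumbbell'': two disks of radius $r/4$ centered at $\bigl(\pm(1-r/4),0\bigr)$, smoothly joined by a thin neck whose central portion $\{(x,y):|x|\leq A\}\cap E$ (for some fixed $A\in(1/2,1-r/4)$) coincides with the flat rectangle $[-A,A]\times[-r/100,r/100]$, smoothly interpolated into the bulb boundaries elsewhere. Since $E\subset\R\times(-r/4,r/4)$ with $r/4<r$, the set $E$ is $r$-thin; moreover, by comparison with the $r$-thin strip $S'_t=\R\times\bigl(-r/2+t/(2r),\,r/2-t/(2r)\bigr)$, which contains $E_0$ and is itself a pointwise solution of~\eqref{FLOW}, we have $E_t\subseteq S'_t$ for every $t<r^2$; hence $E_t$ stays $r$-thin and $\kappa_r^-(E_t,\cdot)\equiv 0$ throughout, reducing the flow to its positive part $\kappa_r=\kappa_r^+$.

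Next, I estimate the lifetime of the bulbs from below by comparison with an isolated disk. Each inner disk $B_{r/4}\bigl(\pm(1-r/4),0\bigr)\subseteq E_0$ evolves, in isolation, as a shrinking disk of radius $\rho(t)$ satisfying $d\rho/dt=-\bigl(\tfrac{1}{2\rho}+\tfrac{1}{2r}\bigr)$; using $\rho\leq r$ this gives $d(\rho^2)/dt\geq -2$, so $\rho(t)^2\geq r^2/16-2t$. By the viscosity comparison principle, $E_t$ contains two disjoint disks of radius at least $\sqrt{r^2/16-2t}$ centered near $\bigl(\pm(1-r/4),0\bigr)$ for every $t<r^2/32$.

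To force the neck to pinch, I compare locally with the evolving strip $S_t=\R\times\bigl(-\sigma(t),\sigma(t)\bigr)$, $\sigma(t)=r/100-t/(2r)$. This strip is $r$-thin, its flat boundary has $\kappa=0$ with the external ball of radius $r$ always fitting, so $\kappa_r=\kappa_r^+=1/(2r)$; it solves~\eqref{FLOW} and collapses at the critical time $T_\star:=r^2/50$. A localized viscosity comparison yields $E_t\cap\bigl((-a,a)\times\R\bigr)\subseteq S_t$ for some $a\in(A,1-r/4)$ and all $t\in[0,T_\star]$. At $t=T_\star=r^2/50<r^2/32$, the slab $(-a,a)\times\R$ is therefore empty, while $E_{T_\star}$ still contains two disks of radius at least $3r/20$ near $\bigl(\pm(1-r/4),0\bigr)$ by the previous step. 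These disks lie on opposite sides of the empty slab, so $E_{T_\star}$ is disconnected, and in particular no viscosity evolution can shrink to a single point.

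The heart of the argument, and its main obstacle, is the localized strip comparison in the third step: because the bulbs protrude from $S_0$, the inclusion $E_0\subseteq S_0$ fails globally and the standard viscosity comparison principle from~\cites{MR3023439,MR3156889,MR3401008} does not apply off the shelf. I expect to resolve this by a first-contact argument inside the slab $(-a,a)\times\R$. If $t_0\leq T_\star$ is the first time at which $\partial E_{t_0}$ meets $\partial S_{t_0}$ at an interior point $(x_0,y_0)$ with $|x_0|<a$, then $(x_0,y_0)$ is a local maximum of $y$ on $\partial E_{t_0}$, so $\kappa(E_{t_0},(x_0,y_0))\geq 0$; the global $r$-thinness from the first step guarantees that the external ball of radius $r$ lying above $(x_0,y_0)$ is disjoint from $E_{t_0}$ (for $r$ small relative to $A$), so $\kappa_r^+(E_{t_0},(x_0,y_0))\geq 1/(2r)=\kappa_r^+(S_{t_0},\cdot)$, contradicting the viscosity inequalities that must hold at a first interior crossing. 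The bulb lower estimate of the second step, together with the choice $a<1-r/4$, rules out the lateral walls $\{|x|=a\}$ as the first contact locus.
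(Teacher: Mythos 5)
Your overall strategy (narrow dumbbell, $r$-thinness forces $\kappa_r^-\equiv 0$, lower bound on the bulb lifetime by comparison with an isolated disk, upper bound on the neck by comparison with a shrinking strip) is very much in the spirit of the paper, and steps 1, 2 and 4 of your plan are essentially correct. But you correctly identify that the heart of the matter is the third step, and that is precisely where the paper's proof does something genuinely different from what you propose, because your local comparison, as set up, does not go through cleanly.

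The paper avoids a local comparison altogether by designing a single \emph{global} barrier that is $r$-thin everywhere, contains both the bulbs and the neck, and shrinks as a supersolution. Explicitly, the barrier is
$$
G_\eta=\Big\{(x,y):\ |y|\le \eta+\tfrac{r}{32\pi^2}\big(1-\cos(4\pi x)\big)\Big\},
$$
an oscillating strip whose vertical width stays uniformly below $r$, so that $\kappa_r^-(G_\eta,\cdot)\equiv 0$, while the boundary curvature is $O(r)$, so that $\kappa_r(G_\eta,\cdot)\ge\tfrac1{4r}$ at every boundary point (Lemma~\ref{0oerfetuu}). Translating $G_\eta$ inward at speed $\tfrac1{4r}$ therefore gives a barrier from above, and the standard \emph{global} comparison principle (which is what \cite{MR3023439} actually supplies) yields $E_t\subseteq G_{\eta(t)}$ directly. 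The pinching of $G_{\eta(t)}$ before the extinction time of the small interior balls then forces the singularity. (To make $E\subset G_{\eta_0}$ possible the paper takes bulbs of radius far smaller than $r$, e.g.\ $r/10^6$ in the proof; with bulbs of radius $r/4$ you would have to widen the barrier's bumps, which can be done while keeping it $r$-thin.)

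Your substitute for this step --- a localized comparison with the flat strip $S_t=\R\times(-\sigma(t),\sigma(t))$ via a first-contact argument --- has several genuine gaps. First, at $t=0$ the neck boundary $[-A,A]\times\{\pm r/100\}$ coincides with $\partial S_0$, so the ``first contact'' happens at $t=0$ and a strict first-contact argument does not start; you would need a perturbed strip $S_t^\epsilon$ and a limit $\epsilon\searrow 0$. Second, $S_t$ is an \emph{exact} solution of~\eqref{FLOW}, not a strict supersolution: at a touching point you only get $\kappa_r(E_{t_0},\cdot)\ge\kappa_r(S_{t_0},\cdot)$, and equality is possible, so there is no contradiction to extract from the viscosity inequalities as you state them. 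Third, the claim that the external $r$-ball at the touching point avoids $E_{t_0}$ needs a quantitative restriction on $a$ (the ball is contained in $\{|x-x_0|\le r\}$ and can reach the bulbs unless $a$ is kept well away from $1-r/4$). Finally, because $\kappa_r$ is nonlocal (it involves a $2r$-neighborhood), a pointwise first-contact argument in the slab $\{|x|<a\}$ is not automatically compatible with the viscosity framework; one would have to prove and invoke a localization lemma for $\kappa_r$. All of these are fixable in principle, but they amount to re-proving a bespoke local comparison principle, whereas the oscillating barrier $G_\eta$ is precisely the device that lets the paper use the off-the-shelf global comparison and skip this entirely. That is the key idea your proposal is missing.
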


\begin{figure}
    \centering
    \includegraphics[width=12cm]{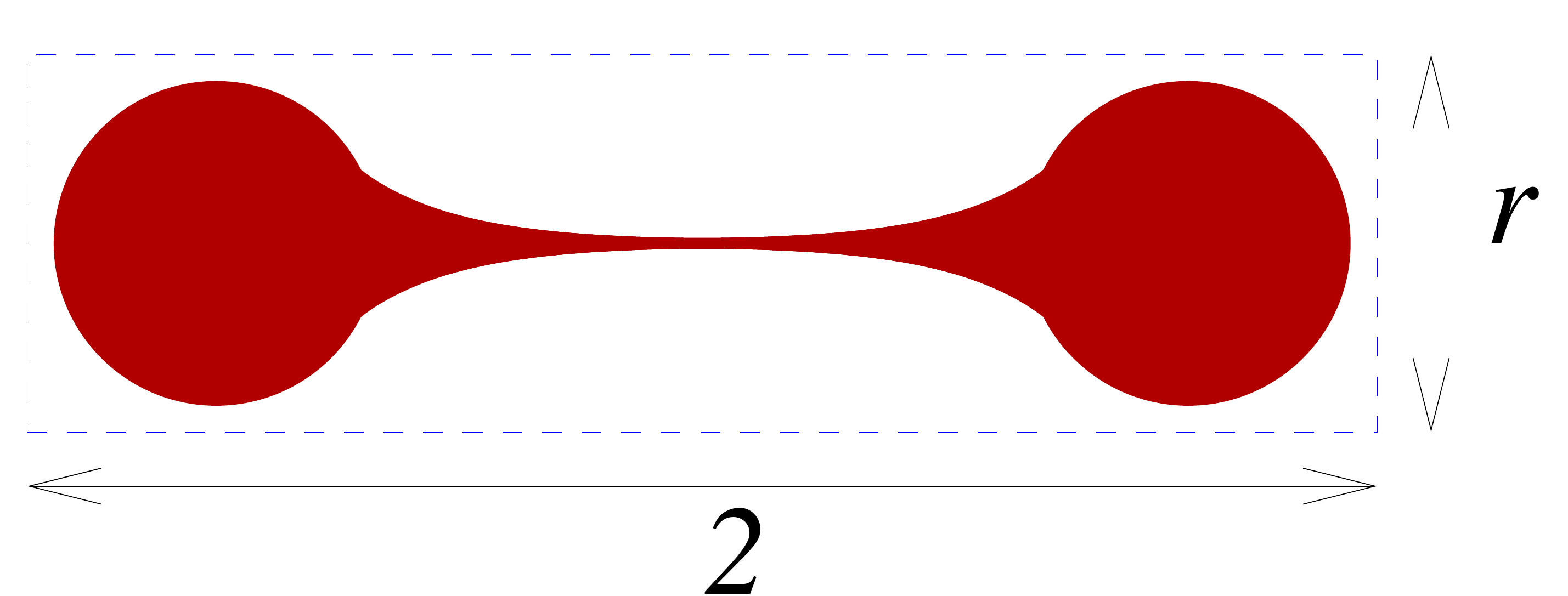}
    \caption{\em {{The set in Theorem~\ref{TH1}.}}}
    \label{WI77}
\end{figure}

The idea of the set constructed in Theorem~\ref{TH1} is depicted in Figure~\ref{WI77}.
Roughly speaking, the vertical trapping of the set
will force the inner $r$-curvatures~$\kappa^-_r$ in~\eqref{KAPPA} to vanish,
while the outer $r$-curvatures~$\kappa^+_r$ will make the neck of the set shrink
faster than the two balls on the side, thus producing the singularity.\medskip

In a sense, the example in Theorem~\ref{TH1} is quite ``pathological'' since
the singularity is produced by all the sets lying in a very small slab.
Next result provides instead an example of a dumbbell in which the
two initial balls have radius of order one, and still the evolution
produces a singularity:

\begin{theorem}[Neckpinch singularity formation for $r$-pudgy sets]\label{TH2}
Assume that~$r$ is sufficiently small. Then,
there exist~$R> r$ and
an $R$-pudgy connected set~$E\subset (-10,10)\times(-10,10)$,
with $C^\infty$ boundary
and such that any
viscosity solution of the $r$-mean curvature flow~\eqref{FLOW} starting from~$E$
does not
shrink to a point
(and any viscosity evolution of~$E$ becomes disconnected).

Such~$E$ has a ``fat dumbbell shape'' in the sense that it is obtained
by gluing two balls of radius~$R$
and with a neck contained in~$
(-10,10)\times\left(-\frac{r}{100},\frac{r}{100}\right)$.
\end{theorem}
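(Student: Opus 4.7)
The plan is to adapt the dumbbell construction of Theorem~\ref{TH1}, replacing the side balls of radius $r/4$ by balls of a fixed radius $R>r$ (say $R=2$, independent of~$r$). I would take $E$ as the union of $B_R(-5,0)\cup B_R(5,0)$ with a smooth, essentially flat neck contained in the slab $(-5,5)\times(-r/100,r/100)$, smoothly merged with the side balls so that $\partial E$ is of class~$C^\infty$. The set~$E$ is $R$-pudgy because it contains $B_R(5,0)$.

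The first step is the pointwise evaluation of $\kappa_r$ on $\partial E$. On each side ball, away from the junctions, both internally and externally tangent balls of radius~$r$ fit (since $r<R$), so $\kappa_r=1/R$. On the central portion of the neck (say $|x_1|<3$), the internally tangent ball of radius~$r$ does not fit in~$E$, because the vertical extent of the neck is below $r/50<2r$; hence $\kappa_r^-=0$. Since that portion is essentially flat and the side balls lie at distance $\geq R$ from it, the externally tangent ball of radius~$r$ lies in the complement, giving $\kappa_r^+=\kappa/2+1/(2r)\geq 1/(4r)$. The key disproportion is therefore that the central neck tends to move inward at speed $\gtrsim 1/r$, whereas the side balls shrink at the much slower speed~$1/R$.

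The conclusion would then follow from the comparison principle for~\eqref{FLOW} (available from the viscosity theory cited after~\eqref{FLOW}) via two barriers. An exact solution of~\eqref{FLOW} is the shrinking strip $S_t:=\{|x_2|<\rho_0-t/(2r)\}$ for $\rho_0<r$, since the computation above gives $\kappa_r^-=0$ and $\kappa_r^+=1/(2r)$ on its boundary, matching the normal velocity. As outer barrier I would use a bounded $C^\infty$-smoothed superset $G_0\supseteq E_0$ consisting of $B_R(-5,0)\cup B_R(5,0)$ joined by a thin slab of half-width $\rho_0$ slightly larger than~$r/100$; a local comparison with~$S_t$ yields $G_t\cap\{|x_1|<3\}\subseteq S_t$, so this region is empty for $t\geq t^\ast:=2r\rho_0\leq Cr^2$. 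Therefore $E_{t^\ast}\cap\{|x_1|<3\}=\emptyset$. For persistence of the side balls, the inner barrier $B_{R/2}(\pm 5,0)\subseteq E_0$ evolves as a pair of classical shrinking circles (since their $r$-curvature equals $2/R$ at every boundary point) and survives for time of order~$R$. Choosing $r$ so small that $Cr^2\ll R$, both side balls are still non-empty at $t=t^\ast$, so $E_{t^\ast}$ consists of two disjoint non-empty components, which gives the claim.

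The main obstacle is the rigorous justification of the local comparison $G_t\cap\{|x_1|<3\}\subseteq S_t$, since~$S_0$ is not a global superset of~$G_0$. The natural strategy is to insert a family of smooth localized supersolutions close to~$S_t$ in the central neck region; away from the side balls the pointwise computation of $\kappa_r$ is unchanged, so the flow is faithfully described there by the strip dynamics. One must also verify that no pathological interaction with the side balls occurs during the brief collapse window $t\leq Cr^2$, which is granted because the side balls shrink only at the slow rate~$1/R$ and hence do not intrude into the central region within time $Cr^2\ll R$.
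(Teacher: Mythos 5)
Your high-level strategy (thin neck shrinks at rate $\sim 1/r$ from the $1/(2r)$ term in $\kappa_r^+$, side balls shrink at rate $\sim 1/R$; confront the two via outer and inner barriers) is exactly the right heuristic, and your inner barrier $B_{R/2}(\pm 5,0)$ is fine. However, the argument has a genuine gap in the outer barrier, and you in fact identify it yourself: the shrinking strip $S_t$ is not a global superset of $E_0$, so the comparison principle for~\eqref{FLOW} does not apply directly. The step you defer to --- ``insert a family of smooth localized supersolutions close to $S_t$ in the central neck region'' --- is not a minor technical patch; it is the entire content of the paper's Lemma~\ref{LEMMA BAR92w0:01}. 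One cannot just localize, because the $r$-curvature $\kappa_r$ depends nonlocally (on an $r$-scale) on the geometry, and the transition region between the strip-like neck and the bulging part is precisely where the sign and size of $\kappa_r^-$ become delicate; a naive cutoff could produce boundary points where $\kappa_r$ is small or negative and the barrier fails to be a supersolution.

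The paper resolves this by building a single connected global superset $F_\eps$, bounded by $\pm g_\eps$ where $g$ interpolates a parabola of curvature $\sim r$ near the origin with the strictly concave profile $|x|/(M^2(1+|x|))$ away from the origin, the transition happening at $|x|\sim Mr$. Lemma~\ref{LEMMA BAR92w0:01} then carries out the quantitative check that $\kappa_r(F_\eps,\cdot)\geq c_0>0$ for $|p_1|\le 10$: near the origin $\kappa_r^-$ vanishes because the internal tangent ball exits below (formula~\eqref{FUO}), giving $\kappa_r \gtrsim 1/r$; away from the origin the concavity gives $\kappa(F_\eps,\cdot)\ge 1/(M^2\cdot 11^3)$, which feeds a uniform lower bound on $\kappa_r$ via $\min\{\kappa/2+1/(2r),\kappa\}$. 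Letting $\eps(t)=\frac{r}{2M}-\frac{c_0 t}{2}$ then makes $\{F_{\eps(t)}\}$ a legitimate slow (outer) barrier that pinches at time $r/(c_0 M)$, which is $\ll$ the extinction time of $B_R$. One further quantitative point you should note: $F_\eps$ has vertical extent of order $1/M^2$, so the side balls have radius $R\sim 1/M^2$ (independent of $r$ but not, as you suggest, $R=2$); the requirement $R>r$ is then met by taking $r$ small, consistent with the theorem's hypothesis.

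In short: the picture is right, but the proposal is missing the key lemma --- an explicit, globally defined, shrinking superset whose $r$-curvature is uniformly positive along its whole relevant boundary --- and this lemma cannot be bypassed by local strip comparison.
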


A picture of the set~$E$ in Theorem~\ref{TH2} is sketched in Figure~\ref{WIK}
on page~\pageref{WIK}.
\medskip

We notice that the formation of neckpinch singularities also in low
dimension is a treat shared by other nonlocal geometric flows, see~\cite{2016arXiv160708032C}.
Nevertheless, the case in~\eqref{FLOW} is conceptually quite different than that
in~\cite{2016arXiv160708032C}, since the latter is scaling invariant
and the nonlocal aspect of the curvature involves the global
geometry of the set (while~\eqref{FLOW} is not scaling invariant
and the calculation of~$\kappa_r$ only involves a neighborhood of fixed side
of a given point).\medskip

Interestingly, the examples considered in Theorems~\ref{TH1} and~\ref{TH2}
have initial sets possessing a ``large curvature'' at some points.
We think that it is interesting to investigate whether or not 
singularities may also emerge when the initial set has curvatures
that are controlled uniformly when~$r$ is small.\medskip

Now, we study the convexity preservation for the flow in~\eqref{FLOW}.
This is a classical topic in the setting of the mean curvature flow,
see e.g.~\cite{MR840401}, and, once again, the behavior of the solutions of~\eqref{FLOW}
turns
out to be very different from the classical case. 
In details, the results that we have are the following:

\begin{theorem}[Convexity preserving for $r$-thin sets]\label{TH3}
Let~$E_t$ be a smooth evolution 
($C^1$ in time and~$C^2$ in space)
of a set~$E$
according to the flow in~\eqref{FLOW}. Suppose that~$E$ is $r$-thin
and convex. Then so is~$E_t$ (till the extinction time).
\end{theorem}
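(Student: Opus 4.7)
The plan is to reduce the flow to a classical local PDE under the running assumption that $E_t$ is convex and $r$-thin, and then to close a continuity argument confirming that these two properties are in fact preserved.

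First, for any convex set $E$ with $C^2$ boundary and any $x \in \partial E$, the outer tangent disk $B_{r,x}^{\mathrm{ext}}$ is disjoint from $E$: decomposing $y - x = \alpha\,\nu_E(x) + \beta\tau$ with $\tau \perp \nu_E(x)$, the inequality $|y - x - r\nu_E(x)|^2 < r^2$ forces $\alpha^2 + \beta^2 < 2 r \alpha$, hence $\alpha > 0$, which places $y$ strictly on the outer side of the supporting line at $x$, so $y \notin E$. Consequently \eqref{KAPPA} gives $\kappa_r^+(E,x) = \kappa(E,x)/2 + 1/(2r)$. On the other hand, when $E$ is $r$-thin no open ball of radius $r$ fits inside $E$, so $\kappa_r^-(E,x) = 0$ identically on $\partial E$. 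Therefore, on any time interval on which $E_t$ stays convex and $r$-thin, the flow \eqref{FLOW} reduces to the purely local equation
$$ \partial_t x_t \cdot \nu_{E_t}(x_t) = -\frac{\kappa(E_t, x_t)}{2} - \frac{1}{2r}. $$

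Second, I would bootstrap via
$$ T^* := \sup\{ T \in [0, T_{\mathrm{ext}}) : E_t \text{ is convex and } r\text{-thin for every } t \in [0,T]\}. $$
On $[0,T^*)$ the reduced equation above applies, and its normal speed is bounded above by $-1/(2r) < 0$, so the boundary strictly contracts and $E_t \subseteq E \subseteq \R \times (-\rho,\rho)$; in particular $r$-thin-ness persists up to $T^*$. For convexity, applying the standard commutator identity $\partial_t \kappa = -\partial_{ss} f - f \kappa^2$ (valid for a planar flow with normal speed $f$, in an evolving arc-length parametrization $s$) with $f = -\kappa/2 - 1/(2r)$ yields the scalar parabolic equation
$$ \partial_t \kappa = \tfrac{1}{2}\,\partial_{ss} \kappa + \tfrac{1}{2}\,\kappa^3 + \tfrac{1}{2r}\,\kappa^2. $$
All reaction terms are nonnegative when $\kappa \ge 0$, and at a spatial minimum one has $\partial_{ss}\kappa \ge 0$; the scalar minimum principle for closed curves thus gives $\kappa_{\min}(t) \ge \kappa_{\min}(0) \ge 0$ on $[0, T^*)$. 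Continuity of the smooth evolution then yields that $E_{T^*}$ is still convex and $r$-thin, and the same reduction can be restarted at $T^*$, contradicting maximality unless $T^* = T_{\mathrm{ext}}$.

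The principal obstacle is exactly this bootstrap: the simplification $\kappa_r = \kappa/2 + 1/(2r)$ depends on the very properties being propagated, and \eqref{FLOW} is not regular with respect to perturbations of the set because of the sharp on/off structure of the cutoffs in \eqref{KAPPA}. One must therefore propagate convexity and $r$-thin-ness simultaneously, as above, rather than treat them independently. An alternative route that avoids the curvature equation entirely is to recognize the reduced flow as the $L^2$-gradient flow of $\tfrac{1}{2}\,\mathrm{Per}(E_t) + \tfrac{1}{2r}|E_t|$: its curve-shortening part preserves convexity by Gage-Hamilton, while the constant inward drift amounts to Minkowski erosion by a disk of radius $t/(2r)$ and hence preserves convexity as well; an operator-splitting (or direct comparison) argument then transfers convexity preservation to the sum.
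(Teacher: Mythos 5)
Your proof is correct and takes essentially the same approach as the paper's: under the running assumptions of convexity and $r$-thinness, $\kappa_r$ reduces to $\tfrac{\kappa}{2}+\tfrac{1}{2r}$, the flow becomes a local PDE, and the curvature evolution equation $\partial_t\kappa=\partial^2_{ss}v+v\kappa^2$ combined with the minimum principle at a spatial minimum of $\kappa$ shows that $\kappa_{\min}(t)$ cannot decrease. The only inessential differences are that the paper preserves $r$-thinness by comparison with the stationary half-planes bounding the slab $\R\times(-\rho,\rho)$ rather than via your strict-contraction observation, and leaves the $T^*$ continuity bootstrap implicit rather than spelling it out as you do.
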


\begin{theorem}[Convexity loss for $r$-pudgy sets]\label{TH4}
There exists a smooth convex set~$E$ which cannot have an evolution~$E_t$
according to~\eqref{FLOW}
which is~$C^1$ in time for~$t\in[0,T)$
 and which preserves the convexity for~$t\in(0,T)$.
\end{theorem}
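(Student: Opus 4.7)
The plan is to produce a smooth convex $r$-pudgy set $E$ along whose boundary the nonlocal curvature $\kappa_r$ admits a jump discontinuity, and then show that such a jump is incompatible with an evolution that is both $C^1$ in time and convex.

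For the construction I would take $E$ to be an elongated convex ellipse $\{x^2/a^2+y^2/b^2\le1\}$ with $a\gg b>r$ chosen so that $b^2/a<r$. At the tips $(\pm a,0)$ the curvature $a/b^2$ exceeds $1/r$, so the internally tangent ball of radius $r$ cannot fit and $\kappa_r=\kappa/2+1/(2r)$ there; near $(0,\pm b)$ one checks that the internally tangent ball of radius $r$ is contained in $E$, so $\kappa_r=\kappa$. By continuity there is a transition point $x_*\in\partial E$ where the ``ball fits'' property changes, and since the transition here is driven by the ball failing to fit globally (not by curvature reaching the local threshold $1/r$), one verifies $\kappa(x_*)<1/r$. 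Consequently the one-sided limits of $\kappa_r$ at $x_*$ differ by the strictly positive amount $c_0:=\tfrac{1}{2r}-\tfrac{\kappa(x_*)}{2}>0$, with the larger value attained from the tip side.

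I would then argue by contradiction. Assume an evolution $E_t$ of $E$ under~\eqref{FLOW} exists that is $C^1$ in $t\in[0,T)$ and convex on $(0,T)$. Parametrize $\partial E$ by arclength around $x_*$ with tangent $\tau_*$ and outward normal $\nu_*$; for small $s>0$ let $z(s),y(s)\in\partial E$ be the points at signed arclength $-s,+s$, living on the tip (no-fit) side and the middle (fit) side respectively. Using~\eqref{FLOW} and the $C^1$-in-time hypothesis, I would expand
\[
z_t(s)=z(s)-v(z(s))\,t\,\nu(z(s))+O(t^2),\qquad y_t(s)=y(s)-v(y(s))\,t\,\nu(y(s))+O(t^2),
\]
with $v(z(s))=v_{\mathrm{tip}}+O(s)$, $v(y(s))=v_{\mathrm{mid}}+O(s)$, and $\nu(z_t(s))=\nu(z(s))+O(t)$.

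The heart is a Taylor computation at $x_*$. From $y(s)-z(s)=2s\,\tau_*+O(s^3)$ and $\nu(z(s))=\nu_*-s\kappa(x_*)\tau_*+O(s^2)$ one obtains $(y(s)-z(s))\cdot\nu(z(s))=-2\kappa(x_*)\,s^2+O(s^3)$, and combining with the velocity jump one arrives at
\[
(y_t(s)-z_t(s))\cdot\nu(z_t(s))=c_0\,t-2\kappa(x_*)\,s^2+o(t+s^2).
\]
Choosing $s^2=\tfrac{c_0}{4\kappa(x_*)}\,t$ (so $s\asymp\sqrt{t}$) this equals $\tfrac{c_0}{2}t+o(t)>0$ for $t$ sufficiently small. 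But convexity of $E_t$, together with $y_t(s)\in E_t$ and $z_t(s)\in\partial E_t$, forces the supporting half-plane condition $(y_t(s)-z_t(s))\cdot\nu(z_t(s))\le 0$, yielding a contradiction.

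The main obstacle I anticipate is the asymptotic bookkeeping at the scale $s\asymp\sqrt{t}$: the naive tangential component $(y-z)\cdot\nu_*$ cancels to $O(s^3)$, so the entire concave contribution $-2\kappa(x_*)s^2$ comes from the rotation $\nu(z)-\nu_*$ and ends up being of the same order as the linear-in-$t$ contribution from the velocity jump. The balance forces the choice $s=\Theta(\sqrt{t})$, and one must verify carefully that all remaining error terms (in $v$, $\nu$, and the trajectories) are genuinely $o(t)$ at this scale.
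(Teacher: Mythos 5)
Your construction and method differ genuinely from the paper's: you use an ellipse (strictly convex boundary) plus a supporting-hyperplane computation at a coupled scale $s\asymp\sqrt{t}$, whereas the paper takes a rounded square with a \emph{flat} boundary segment and then directly passes the convexity inequality $\Psi((1-\vartheta)x+\vartheta y,t)\le(1-\vartheta)\Psi(x,t)+\vartheta\Psi(y,t)$ to the limit $t\searrow0$ to deduce that $\varphi=\partial_t\Psi(\cdot,0)$ must be convex, contradicting the jump in $\kappa_r$. That limit passage is possible only because $\Psi(\cdot,0)$ is \emph{affine} on the relevant interval, so the $t=0$ contribution drops out exactly and one works at each fixed $s$ separately — no coupled limit is needed.

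This is precisely where your argument has a gap. With the ellipse, $\Psi(\cdot,0)$ is strictly convex, and the ``buffer'' $-2\kappa(x_*)s^2$ you must overcome is of the same order $t$ as the jump contribution $c_0 t$ when $s\asymp\sqrt{t}$. At that scale the error terms from the trajectory expansion are only $o(t)$ \emph{pointwise in $s$} (the hypothesis is merely $C^1$ in time, not $C^2$, so $O(t^2)$ is unjustified), and the $o(t)$ need not be uniform as $s\to0$ together with $t$. In fact the implication you are trying to prove is false without the flat segment: take, for instance,
\begin{equation*}
\Psi(x,t)=\frac{\kappa(x_*)}{2}\,x^2+t\,\varphi_{\delta(t)}(x),\qquad \delta(t)=\sqrt{\frac{2c_0 t}{\kappa(x_*)}},
\end{equation*}
where $\varphi_\delta=\varphi*\eta_\delta$ is a mollification at scale $\delta$ of your jump profile $\varphi$ (with a gradual mollifier so that $\sup|\eta'|<2$). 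One checks that $\Psi(\cdot,t)$ is convex for all $t\ge0$, that $\Psi(x,\cdot)$ is $C^1$ at $t=0$ with $\partial_t\Psi(\cdot,0)=\varphi$ (with the jump), and that the supporting half-plane inequality $(y_t(s)-z_t(s))\cdot\nu(z_t(s))\le0$ holds for every $s,t>0$. Here $\delta(t)\asymp\sqrt{t}$ is \emph{larger} than your optimal $s\asymp\sqrt{c_0 t/\kappa(x_*)}$, so at the scale where you try to extract a contradiction you are still inside the mollification layer and the jump has not yet materialized. Thus convexity plus pointwise $C^1$ regularity alone does not rule out a velocity jump at a strictly convex point of $\partial E$. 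The fix is exactly what the paper does: choose an initial set with a flat arc of positive length so that $\kappa(x_*)=0$; then the balance term vanishes, the argument works for each fixed $s$, and the uniformity issue disappears.
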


Roughly speaking, Theorem~\ref{TH4}
states that there exists some initial set~$E$ such that
the corresponding flow~$E_t$ cannot be at the same time regular and convex.
As for the regularity requested in Theorem~\ref{TH4},
it is assumed that, for some time~$T>0$, the boundary of the set~$E_t$
is locally parameterized by a convex function in the space variable
which is $C^1$ in time up to~$t=0$.  

\begin{figure}
    \centering
    \includegraphics[width=8cm]{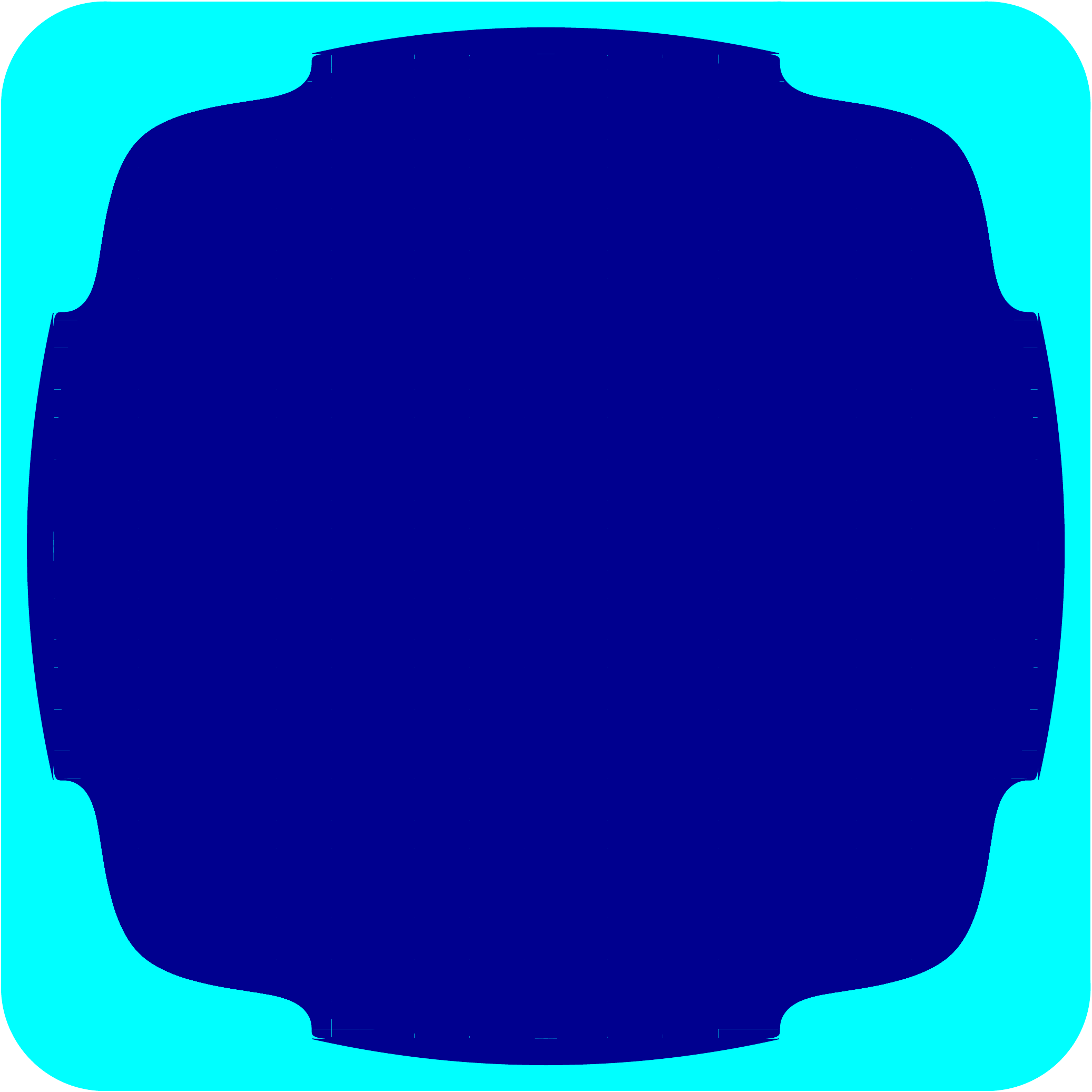}
    \caption{\em {{The possible loss of convexity in Theorem~\ref{TH4}.}}}
    \label{W823495-123I77}
\end{figure}

We have sketched in Figure~\ref{W823495-123I77}
a possible loss of convexity for the geometric flow in~\eqref{FLOW}
(a quantitative version of this picture will ground the rigorous
analysis performed in Section~\ref{PTH4}).

It is interesting to observe that Theorem~\ref{TH4} highlights an important difference with respect to
the classical curvature evolution flow in the plane, in which
if the initial curve is convex then the evolving
curves are all smooth and convex, till they shrink to a point,
becoming closer and closer to a round ball, see
page~70 in~\cite{MR840401} for detailed results
(and also~\cite{MR772132} for related higher-dimensional results
for mean curvature flows).

We refer to~\cites{2015arXiv151106944S, 2016arXiv160307239C}
for results on the preservation of a nonlocal mean curvature and
of the convex structure of a set under a fractional mean curvature evolution
(but the situation of~\eqref{FLOW} here is very different,
in light of Theorem~\ref{TH4}, 
which underlies a different behavior with respect to
the classical
mean curvature flow and also with respect to
the fractional mean curvature flow).
\medskip

It is interesting
to observe that
the example constructed in Theorem~\ref{TH4}
starts from an initial set possessing a ``large curvature'' at some points.
We think that it is interesting to investigate whether or not 
convexity is preserved if the initial set has curvatures that are
positive and bounded uniformly when~$r$ is small.\medskip

Now, we consider traveling waves for the flow in~\eqref{FLOW},
i.e. solutions of~\eqref{FLOW} in which~$E_t$ is of the form
\begin{equation}\label{TWW} y> h(x)+ct,\end{equation}
for some real function~$h$, and~$c\in\R$. In this setting,
we have that the geometric flow in~\eqref{FLOW}
presents a new class of traveling waves, which are obtained
by gluing together a convex function depending
on~$r$ near the origin and 
the ``standard grim reaper'' at infinity:

\begin{theorem}\label{TW} For any~$c\in (0,r)$
there exists a traveling wave for the geometric flow
in~\eqref{FLOW} with speed equal to~$c$.
The corresponding traveling set is~$C^{1,1}$ and convex.
\end{theorem}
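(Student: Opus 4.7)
The plan is to seek a traveling set of the form $E_t = \{(x,y)\in\R^2 : y > h(x)+ct\}$ for a convex $C^{1,1}$ function $h$. Using $\partial_t(x_0, h(x_0)+ct) = (0,c)$ and the outer unit normal $\nu = (h'(x_0),-1)/\sqrt{1+h'(x_0)^2}$, the flow condition $\partial_t x_t \cdot \nu = -\kappa_r$ translates to
\[
\kappa_r\bigl(E_0,(x_0,h(x_0))\bigr) \;=\; c\cos\theta(x_0), \qquad \cos\theta(x_0) := \frac{1}{\sqrt{1+h'(x_0)^2}}.
\]
Since $E_0$ is convex, the supporting hyperplane theorem gives $B_{r,x}^{\mathrm{ext}}\subseteq\R^2\setminus E_0$ at every boundary point, so $\kappa_r^+ \equiv \kappa/2+1/(2r)$ identically. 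The traveling wave equation therefore reduces to the classical grim reaper ODE $\kappa = c\cos\theta$ wherever $B_{r,x}^{\mathrm{int}}\subseteq E_0$, and to the modified ODE $\kappa = 2c\cos\theta - 1/r$ wherever $B_{r,x}^{\mathrm{int}}\not\subseteq E_0$.

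I would then construct $h$ by gluing two pieces, parameterized by the tangent angle $\theta$. Near the origin (small $\theta$) I would take a solution of the modified ODE starting from the apex $\theta=0$ with initial curvature $2c-1/r$ (positive precisely when $c>1/(2r)$); this is the ``convex function depending on $r$'' near the origin. For larger $\theta$ I would use a vertically translated copy of the grim reaper $h_\ast(x) = -\tfrac{1}{c}\log\cos(cx)$. At a matching angle $\theta^\ast$ the two pieces are glued by requiring continuity of both position and tangent direction; the curvature jumps from $2c\cos\theta^\ast-1/r$ to $c\cos\theta^\ast$ (an increase, since $cr<1$), so the resulting profile is $C^{1,1}$ but in general not $C^2$, and convex because $\kappa\geq 0$ on both pieces. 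For $c\leq 1/(2r)$, the modified ODE cannot be initialized with nonnegative curvature at the apex, so I would take $\theta^\ast=0$; the traveling set is then the pure grim reaper itself, which is $C^\infty$ and hence $C^{1,1}$.

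The main obstacle is verifying the global inner-ball condition consistently with the regime used on each piece, because in the whole range $c<r\leq 1\leq 1/r$ one has $\kappa\leq 1/r$ pointwise, so failure of $B_{r,x}^{\mathrm{int}}\subseteq E_0$ on the modified piece must be a non-local effect. For the outer (grim reaper) piece, I would invoke a Minkowski sum argument: its radius of curvature is at least $1/c>r$ and the strip $\{|x|<\pi/(2c)\}$ contains balls of radius $r$ (since $\pi/c>2r$), so $E_0 = (E_0\ominus B_r)\oplus B_r$ past the transition, yielding $B_{r,x}^{\mathrm{int}}\subseteq E_0$ there. For the modified piece, the matching angle $\theta^\ast$ must be tuned precisely so that the candidate interior ball at each point of the modified arc is obstructed by the opposite branch of the constructed curve and therefore fails to be contained in $E_0$; this non-local geometric tuning is the delicate technical step that makes the two regimes internally consistent and produces the claimed $C^{1,1}$ convex traveling wave for every $c\in(0,r)$.
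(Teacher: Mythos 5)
Your plan follows the same high-level route as the paper: split the traveling-wave equation according to which $\kappa_r$-regime is active, solve the modified ODE $\kappa=2c\cos\theta-1/r$ near the apex and the classical grim-reaper ODE $\kappa=c\cos\theta$ far out, and glue the two arcs at a tangent-matching angle to obtain a convex $C^{1,1}$ profile. However, the step you label ``the delicate technical step that makes the two regimes internally consistent'' is precisely the mathematical content of the proof, and you do not carry it out. The paper does it in four steps (after normalizing $c$ to $1$): it drops a ball $B_r$ onto the graph of the modified profile $h_0$ to locate a touching point $x_r$, shows this touching point is unique, deduces that for $|x|<x_r$ no interior tangent ball of radius $r$ fits (so the modified regime is genuinely forced there, not merely postulated), proves the slope estimate $h_0'(x_r)\geq\sqrt{r^2-1}$, and uses it to check that the grim-reaper piece has curvature $\le 1/r$ past $x_r$. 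Without an argument of this kind your construction is circular: you have chosen the matching angle so that the two regimes \emph{should} be consistent, but you have not shown that the resulting curve actually satisfies the interior- and exterior-ball inclusions that define which branch of $\kappa_r$ applies. Similarly, your assertion that for $c\le 1/(2r)$ the pure grim reaper is a solution rests on an unverified global claim that $B^{\mathrm{int}}_{r,x}\subseteq E$ at every boundary point of the reaper, and your claim that the modified piece is convex ($\kappa\geq 0$) is not checked along the whole arc; the paper proves the analogous facts through the bounds $0\le\phi\le\sqrt{4r^2-1}$ on $\phi=h_0'$.

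One substantive and worthwhile observation in your proposal is the sign of the apex curvature $2c-1/r$ and the resulting case split at $c=1/(2r)$. The paper sidesteps this by asserting that the dilation normalizing the speed to $1$ turns $c\in(0,r)$ into $r>1$; but if $\tilde h(\tilde x)=\lambda h(\tilde x/\lambda)$ with $\lambda=c$, the modified equation becomes $\tilde h''/(1+\tilde h'^2)=2-\sqrt{1+\tilde h'^2}/(cr)$, so the normalized radius is $cr$, which for $c<r\le 1$ is less than $1$, not greater. The paper's proof genuinely uses the normalized radius being larger than $1$ (e.g.\ in \eqref{MAsIS}, in the bound $\kappa_0(0)=2-1/r>1/r$, and in $\sqrt{r^2-1}$ being real), so the regime it actually treats is $cr>1$. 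Your proposal, by keeping $c$ and $r$ explicit and noticing the threshold $c=1/(2r)$, is a correct reaction to this; but it also remains a sketch and does not close the non-local consistency gap identified above, so it cannot yet be regarded as a proof.
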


A more precise description of the shape of this traveling wave will be given
in the forthcoming formula~\eqref{hstar}. See also Figure~\ref{NEWG}
for a picture of this traveling wave.
\medskip

The rest of the paper is organized as follows. In Section~\ref{AP},
we first discuss some ``pathologies'' of the geometric flow in~\eqref{FLOW}
and recall an approximation scheme exploited in~\cites{MR2728706, MR3023439, MR3401008}
(such an approximation is not explicitly used here, but it provides
a conceptual framework for the flow in~\eqref{FLOW} from a viscosity perspective).
In Section~\ref{PAO1} we consider the neckpinch formation for $r$-thin sets
and we prove Theorem~\ref{TH1}. Then, in
Section~\ref{PAO2} we consider the neckpinch formation for $r$-pudgy sets
and we prove Theorem~\ref{TH2}. The convexity preservation for $r$-thin sets
is discussed in Section~\ref{PAO3}, where we present the proof
of Theorem~\ref{TH3}. The possible
loss of convexity and the proof of Theorem~\ref{TH4}
are presented in Section~\ref{PTH4}, and the traveling waves, with the proof
of Theorem~\ref{TW}, are discussed in Section~\ref{TWA}.

\section{A viscosity approximation of~\eqref{FLOW}}\label{AP}

The geometric flow in~\eqref{FLOW} is rather special,
given its lack of invariance and different behaviors at different scales.
Also, the velocity field is discontinuous (even for convex sets)
at points where tangent balls possess two or more projections along the boundary.
This lack of regularity in the velocity produces
some instability properties in the set evolution of~\eqref{FLOW} (corresponding to a ``fattening''
of the associated evolution by level sets).
For instance, if one considers the initial set~$E:=\R\times(-\ell,\ell)$,
from~\eqref{KAPPA} it holds that~$\kappa_r=0$ if~$\ell>r$
and~$\kappa_r=\frac1{2r}$ if~$\ell\in(0,r)$. Therefore this set stays put
under the geometric flow in~\eqref{FLOW} if~$\ell>r$,
but it shrinks to a line in finite time if~$\ell\in(0,r)$.\medskip

Due to phenomena of this sort, to compensate
the lack of continuity of the velocity field in~\eqref{FLOW},
it is desirable to approximate
this flow with a more regular one.
For this, we recall a procedure discussed in
Section~6.4 of~\cite{MR3401008}.
We consider a smooth function~$
f : \R \to[0,1]$, which is
even, supported in~$[-r,r]$ and such that~$f(x)=1$ for any~$x\in\left[-\frac{r}2,\frac{r}2\right]$,
and~$f'(x)\le0$ for any~$x\ge0$.
Recalling~\eqref{KAPPA},
for any~$x\in\partial E$ one defines
$$ \kappa_f(x,E):=\kappa^+_f (x,E)
+\kappa^-_f(x,E),$$ where 
\begin{equation}\label{93994-129348-23948}
\begin{split}&
\kappa^+_f (x, E) := -\int_0^r \frac\sigma{r} \,f'(\sigma)\, \kappa^+_\sigma (x, E) \,d\sigma\\
{\mbox{and }}\;&
\kappa^-_f (x, E) := -\int_0^r \frac\sigma{r} \,f'(\sigma)\,
\kappa^-_\sigma (x, E) \,d\sigma.\end{split}
\end{equation}
Then, one can consider the geometric flow associated to~$\kappa_f$,
that is, the flow in~\eqref{FLOW} with~$\kappa_r$ replaced by~$\kappa_f$,
\begin{equation}
\label{FLOWf} \partial_t x_t\cdot\nu_{E_t}(x_t)=-\kappa_f(E_t,x_t). 
\end{equation}
This flow can be seen as an approximation of that in~\eqref{FLOW}
and it is used in~\cite{MR3023439} to establish uniqueness results
and in~\cite{MR2728706} for numerical purposes.\medskip

In this article, we will not make use explicitly of the flow
in~\eqref{FLOWf} (though similar arguments as the ones exploited here
may be used in this framework as well), but we expect that
solutions of~\eqref{FLOW} emerge from an appropriate limit
of the viscosity
solutions of~\eqref{FLOWf} as the function~$f$ approaches
the characteristic function of~$(0,r)$. To rigorously perform
such a limit procedure, one has to check uniform continuity
of the viscosity solutions of~\eqref{FLOWf}.\medskip

Though several notions of solutions are possible for the geometric flow under consideration,
for the sake of concreteness we consider here the following one,
inspired by a viscosity approach introduced in~\cite{MR1216585},
see also~\cite{MR1467354}.
We now define~${\mathcal{S}}_{{\mbox{fast}}}$ as
the collection of sets~$\mathcal{U}_t\subset\R^2$
whose evolution is~$C^{1}$ in time and~$C^2$ in space,
and with velocity strictly higher than the one prescribed by the geometric flow~\eqref{FLOW}.
Namely, given~$b>a\ge 0$, we consider a map~$[a,b]\ni t\mapsto\mathcal{U}_t\subset\R^2$
and we assume that:
\begin{itemize}
\item there exists a bounded open set $A\subset\R^2$ 
such that $\partial\mathcal{U}_t\subset A$ for all $t\in[a,b]$,
\item the signed distance function~$\R^2\times [0,T]\ni(x,t)\mapsto
{{\mbox{dist}}}_{\mathcal{U}_t}(x)$ from the boundary of~$\mathcal{U}_t$ is $C^1$ in~$t\in(a,b)$,
$C^0$ in~$t\in[a,b]$
and~$C^2$ in $x\in A$,
\item if~$v(x,t)$ denotes the inner normal velocity of~$\partial\mathcal{U}_t$ at~$x\in\partial\mathcal{U}_t$,
then~$v(x,t)>\kappa_r(\mathcal{U}_t,x)$.
\end{itemize}
Then, we let ${\mathcal{S}}_{{\mbox{fast}}}$ be the collection of all~$\{\mathcal{U}_t\}_{t\in [a,b]}$
with such properties.

Similarly, we define~${\mathcal{S}}_{{\mbox{slow}}}$ as
the collection of sets~$\mathcal{V}_t$
which possess an evolution that is~$C^{1}$ in time and~$C^2$ in space,
and with velocity strictly smaller than that prescribed by the geometric flow.

In this setting, we say that~$\{E_t\}_{t\in [0,T)}$ is a evolution of the geometric flow under consideration
if for any~$\{\mathcal{U}_t\}_{t\in
[a,b]}\in {\mathcal{S}}_{{\mbox{fast}}}$ and any~$\{\mathcal{V}_t\}_{t\in[a,b]}\in {\mathcal{S}}_{{\mbox{slow}}}$
such that $0\le a<b<T$
and~${\mathcal{U}_0}\subset E_0\subset{\mathcal{V}}_0$, it holds that~${\mathcal{U}_t}\subset E_t\subset{\mathcal{V}}_t$
for all~$t\in[0,T)$. Existence of such evolutions for very general flows 
which include~\eqref{FLOW} can be easily proved by Perron's method (see~\cite{MR1467354}).
In~\cite{MR1216585} it is proved that, in the case of the mean curvature flow,
any generalized solutions of this kind is contained in the zero-level set
of the viscosity solutions.

Notice that comparison principles with evolutions in ${\mathcal{S}}_{{\mbox{fast}}}$ and ${\mathcal{S}}_{{\mbox{slow}}}$
are automatic in this setting. We do not address here
the problem of constructing these types of solutions for the geometric flow under
consideration and we do not address the study of the uniqueness properties of solutions
in this class. 
\section{Proof of Theorem \ref{TH1}}\label{PAO1}

\subsection{Geometric barriers}

We start with the construction of a narrow barrier. To this aim,
we fix~$\eta\in\left(0,\frac{r}{64\pi^2}\right)$ and we define
$$ G_{\eta}:=\Big\{
(x,y)\in\R^2 {\mbox{ s.t. }} |y|\le \eta+\frac{r}{32\pi^2}\big(1-\cos(4\pi x)\big)
\Big\} .$$
Then, we have:

\begin{lemma}\label{0oerfetuu}
If~$r$ is sufficiently small
then
\begin{equation}\label{0pdk-12pweXdlf}
\kappa_r(G_{\eta},p)\ge \frac{1}{4r}\qquad{\mbox{for any }}p=(p_1,p_2)\in\partial
G_{\eta} .\end{equation}
\end{lemma}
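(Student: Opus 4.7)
The plan is to establish the bound $\kappa_r\ge 1/(4r)$ by treating $\kappa_r^+$ and $\kappa_r^-$ separately. Exploiting the $y\mapsto -y$ symmetry of $G_\eta$, it suffices to consider a point $p=(x_0,h(x_0))$ on the upper graph, where $h(x):=\eta+\tfrac{r}{32\pi^2}\bigl(1-\cos(4\pi x)\bigr)$. Direct differentiation gives $h'(x)=\tfrac{r}{8\pi}\sin(4\pi x)$ and $h''(x)=\tfrac{r}{2}\cos(4\pi x)$, so $|h'|\le r/(8\pi)$ and $|h''|\le r/2$, and consequently the classical curvature $\kappa$ of $\partial G_\eta$ satisfies $|\kappa|\le r/2$. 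It follows that $\tfrac{\kappa}{2}+\tfrac{1}{2r}\ge -\tfrac{r}{4}+\tfrac{1}{2r}\ge\tfrac{1}{4r}$ as soon as $r\le 1$.

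The inner term is immediate: since $\eta<\tfrac{r}{64\pi^2}$ and $\max h\le \eta+\tfrac{r}{16\pi^2}$, the set $G_\eta$ is contained in a horizontal strip of half-width strictly less than $r$, so no open ball of radius $r$ fits inside $G_\eta$ and \eqref{KAPPA} gives $\kappa_r^-(G_\eta,p)=0$.

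The heart of the argument is to verify that the external tangent ball of radius $r$ at $p$ is disjoint from $G_\eta$, which then yields $\kappa_r^+(G_\eta,p)=\tfrac{\kappa}{2}+\tfrac{1}{2r}$. Write $s:=\sqrt{1+h'(x_0)^2}$ and $c:=p+r\bigl(-h'(x_0),1\bigr)/s$, so that the ball in question is $B_r(c)$. Because $|h'|$ and $\max h$ are both $O(r)$, a short computation using $s-1=h'(x_0)^2/(s+1)$ shows that $c_y-r>0$ for $r$ small, so $B_r(c)\subset\{y>0\}$ and the ball cannot meet the lower graph. To rule out intersection with the upper graph, I would study the squared-distance function $f(X):=(X-c_x)^2+(h(X)-c_y)^2$. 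Writing $\Delta:=X-x_0$ and $H:=h(X)-h(x_0)$ and expanding, elementary algebra gives
\[
f(X)-r^2=\Delta^2+H^2-\tfrac{2r}{s}\bigl(H-h'(x_0)\Delta\bigr).
\]
Taylor's theorem yields $H-h'(x_0)\Delta=\tfrac12 h''(\xi)\Delta^2$ for some $\xi$ between $x_0$ and $X$, so the last term is bounded in absolute value by $\tfrac{r^2}{2}\Delta^2$ thanks to $|h''|\le r/2$ and $s\ge 1$; since $\tfrac{r^2}{2}\Delta^2<\Delta^2+H^2$ for $r<\sqrt 2$ (with equality only at $\Delta=0$), the bound $f\ge r^2$ holds globally and forces $X=x_0$ in the equality case. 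A short convexity argument then concludes: any putative intersection point $(X,Y)\in B_r(c)\cap G_\eta$ satisfies $0<Y\le h(X)<c_y$, and the vertical segment from $(X,Y)$ to $(X,h(X))$ moves toward $c$, proving $(X,h(X))\in\overline{B_r(c)}$; the global inequality forces $X=x_0$, and a direct check on the line $X=x_0$ shows that the only candidate is $p$ itself, which lies on $\partial B_r(c)$ rather than in the open ball.

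Combining the three steps yields $\kappa_r(G_\eta,p)=\tfrac{\kappa}{2}+\tfrac{1}{2r}\ge\tfrac{1}{4r}$, which is the desired bound. The main obstacle I foresee is the global verification in the core step: a priori the external tangent ball, despite being locally correct by the curvature comparison $|\kappa|\ll 1/r$, could wrap around and touch the oscillating graph far from $p$. The bounds $|h'|,|h''|=O(r)$ are precisely what converts the local tangency identities $f(x_0)=r^2$, $f'(x_0)=0$ into the clean global inequality $f\ge r^2$.
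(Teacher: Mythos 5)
Your overall strategy matches the paper's: bound $|g_\eta'|\le r/(8\pi)$ and $|g_\eta''|\le r/2$, deduce that the external tangent ball of radius $r$ stays outside $G_\eta$ so $\kappa_r^+=\tfrac\kappa2+\tfrac1{2r}\ge-\tfrac r4+\tfrac1{2r}$, use the small vertical diameter to get $\kappa_r^-=0$, and add. The paper asserts the external-ball condition in a single sentence directly from the curvature bound; you rightly flag that this is a \emph{global} condition needing more than a local curvature estimate, and your squared-distance computation for the upper graph (the identity $f(X)-r^2=\Delta^2+H^2-\tfrac{2r}{s}(H-h'(x_0)\Delta)$, followed by Taylor with $|h''|\le r/2$) is a clean and correct way to verify it. That part is a genuine improvement in rigor over the paper's one-liner.

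The one real gap is in your treatment of the lower graph. You claim $c_y-r>0$ ``because $|h'|$ and $\max h$ are $O(r)$,'' but the correct expansion is
\[
c_y-r \;=\; h(x_0)-\frac{r(s-1)}{s},\qquad \frac{r(s-1)}{s}\le \frac{r\,h'(x_0)^2}{2},
\]
and a crude bound $|h'|\le r/(8\pi)$ only gives $c_y-r\ge \eta - \tfrac{r^3}{128\pi^2}$, which can be negative: the lemma allows \emph{any} $\eta\in\bigl(0,\tfrac{r}{64\pi^2}\bigr)$, in particular $\eta\ll r^3$. (Note also that the relevant quantity is $h(x_0)$, not $\max h$; the issue is that $h(x_0)$ can be \emph{small}, not that it is $O(r)$.) The claim $c_y-r>0$ is nonetheless true, but it needs the finer observation that $h'$ and $h-\eta$ vanish together: since $\sin^2\theta\le 2(1-\cos\theta)$ one gets $h'(x_0)^2\le r\bigl(h(x_0)-\eta\bigr)$, whence $c_y-r\ge \bigl(1-\tfrac{r^2}{2}\bigr)h(x_0)+\tfrac{r^2}{2}\eta>0$. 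Alternatively, you can avoid the lower graph entirely: a short estimate shows the center $c$ lies strictly \emph{above the graph of $h$} (indeed $c_y-h(c_x)\ge r/s-\|h'\|_\infty\,|c_x-x_0|\ge r/s-\tfrac{r^3}{64\pi^2}>0$), and since you already proved the open ball never meets the graph of $h$, connectedness forces $B_r(c)\subset\{y>h(x)\}\subset\R^2\setminus G_\eta$, with no separate case for the lower graph needed. With that repair the proof is complete and is, if anything, more careful than the paper's own.
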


\begin{proof} Let
$$g_\eta(x):=\eta+\frac{r}{32\pi^2}\big(1-
\cos(4\pi x)\big).$$
We have that
\begin{eqnarray*}
&& |g_\eta'(x)|=\left|\frac{r}{8\pi} \sin(4\pi x)\right|\le \frac{r}{8\pi} \\{\mbox{and }}
&& |g_\eta''(x)|=\left|\frac{r}{2} \cos(4\pi x)\right|\le \frac{r}{2}.
\end{eqnarray*}
This implies that the curvature of the graph of~$g_\eta$
is bounded in absolute value by~$\frac{r}{2}$, provided that~$r$ is sufficiently
small, and therefore~$G_\eta$ can always be touched from outside
by a ball of radius~$r$. Consequently, by~\eqref{KAPPA}, it holds that
\begin{equation}\label{01-23osd}
\kappa_r^+(G_\eta,x)=\frac{\kappa(G_\eta,x)}2+\frac{1}{2r}\ge
-\frac{r}{2}+\frac{1}{2r}.
\end{equation}
On the other hand, the vertical diameter of~$G_\eta$ is~$\frac{r}{16\pi^2}$
and so
no ball of radius~$r$ can be contained inside~$G_\eta$. Therefore, by~\eqref{KAPPA},
we conclude that~$\kappa_r^-(G_\eta,x)=0$.
This and~\eqref{01-23osd} give that
$$
\kappa_r(G_\eta,x)\ge
-\frac{r}{2}+\frac{1}{2r},$$ from which the desired result follows.
\end{proof}

\begin{figure}
    \centering
    \includegraphics[width=13cm]{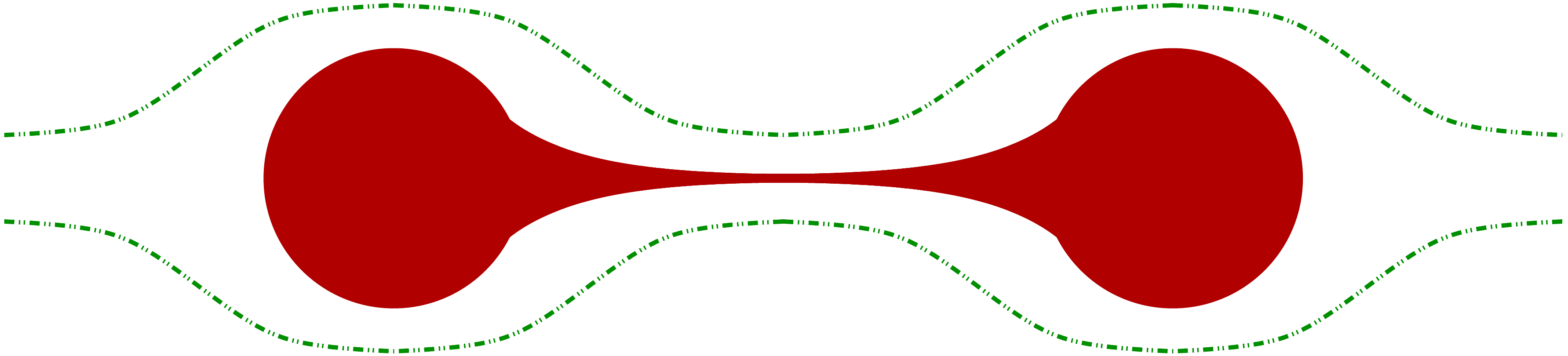}
    \caption{\em {{The sets~$E$ and~$G_{\eta_0}$ in the proof of Theorem~\ref{TH1}.}}}
    \label{923ccv40WI77}
\end{figure}

\subsection{Completion of the proof of Theorem \ref{TH1}}\label{SECla21}

With Lemma~\ref{0oerfetuu}, we can complete the proof
of
Theorem \ref{TH1}. For this, we take~$t_r>0$ to
be the extinction time of
the ball~$B_{r/10^6}$. We define
$$ \eta_0:=\min\left\{\frac{t_r}{8r},\,\frac{r}{128\pi^2}\right\}\quad
{\mbox{ and }}\quad\eta(t):=\eta_0-\frac{t}{4r}.$$
We take $q_\pm:=\left(\pm\frac12,0\right)$
and
$$ N:=
[-q_-,q_+]\times \left[ -\frac{\eta_0}2,\frac{\eta_0}2 \right].$$ We consider
a connected and smooth set~$E\subset(-1,1)\times\left(-\frac{r}2,\frac{r}2\right)$
such that
$$ G_{\eta_0}\supseteq E\supseteq B_{r/10^6}(q_-)\cup B_{r/10^6}(q_+)\cup N,$$
with~$E\cap\{ |x|\le 1/10\}=N$,
see Figure~\ref{923ccv40WI77}. Confronting with halfplanes, the comparison principle
for~\eqref{FLOW} (see e.g. Section~3.3
in~\cite{MR3023439})
gives that also the evolution~$E_t$ lies in~$
(-1,1)\times\left(-\frac{r}2,\frac{r}2\right)$.
In addition, the velocity of~$G_{\eta(t)}$ in modulus
coincides with~$\frac{1}{4r}$, which is less than
the $r$-curvature of~$G_{\eta(t)}$, thanks to~\eqref{0pdk-12pweXdlf}.
As a consequence of this and of the comparison principle, we obtain that~$E_t
\subseteq G_{\eta(t)}$. Since $t_\star:=4r\eta_0<t_r$
and~$\eta(t_\star)=0$, we have that~$
G_{\eta(t_\star)}$ develops a neck singularity, and so does~$E_t$ for some~$t\in(0,t_\star)$.
This completes the proof of Theorem~\ref{TH1}.

\section{Proof of Theorem \ref{TH2}}\label{PAO2}

\subsection{Geometric barriers}

This section is devoted to the construction of
an explicit barrier for the geometric flow in~\eqref{FLOW}. Roughly speaking,
this barrier is constructed by taking the region trapped between a graph
and its reflection along the horizontal axis. Such graph is constructed
by interpolating a parabola with curvature comparable to~$r$ near the origin
with a uniformly concave function. The interpolation will occur when
the values on the abscissa are of order~$r$ and the functions are also of order~$r$,
but the gradients are of order~$1$. This quantitative construction
is needed to compute efficiently the $r$-curvatures in~\eqref{KAPPA}
and the example that we provide may turn out to be useful also in other cases.

We fix~$M\ge1$, to be taken appropriately large in the sequel.
We also consider a bump function~$\varphi\in C^\infty_0([-2,2],\,[0,1])$,
with~$\varphi=1$ in~$[-1,1]$, $|\varphi'|\le2$
and~$|\varphi''|\le2$.
We also define~$\rho:=Mr$ and, for any~$x\in\R$,
$$ g(x):=\frac{x^2}{2M^2\rho}\,\varphi\left(\frac{x}{\rho}\right)+
\left( 1-\varphi\left(\frac{x}{\rho}\right)\right)\,\frac{|x|}{M^2\,(1+|x|)}.$$
For any~$\e>0$ we set~$g_\e(x):=\e+g(x)$ and
$$ F_{\e}:=\Big\{
(x,y)\in\R^2 {\mbox{ s.t. }} |y|\le g_\e(x)
\Big\} .$$

\begin{figure}
    \centering
    \includegraphics[width=12cm]{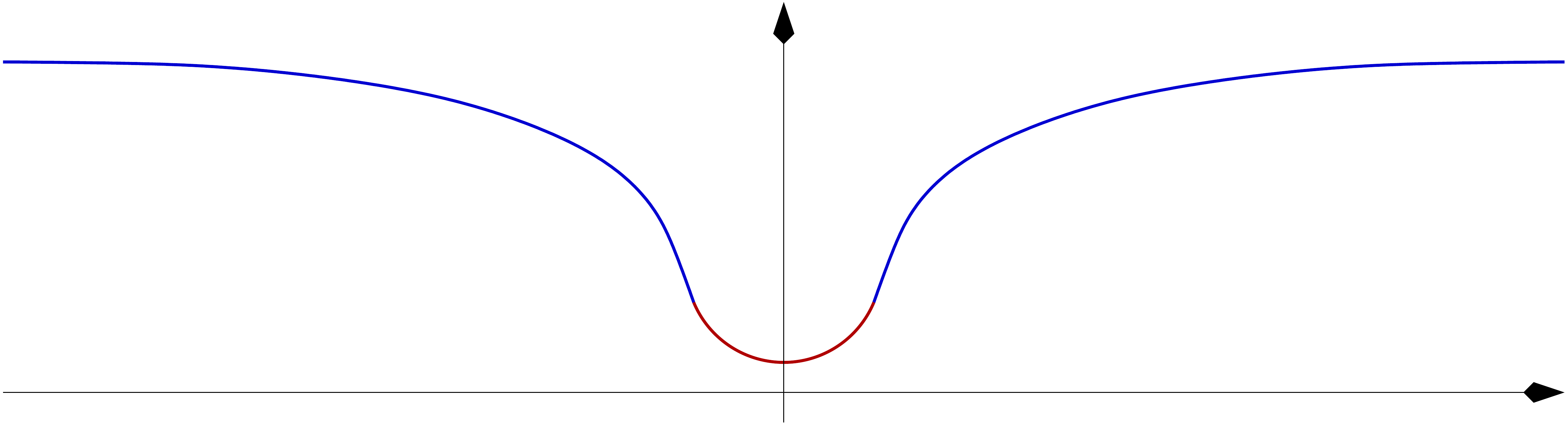}
    \caption{\em {{The function~$g_\e$.}}}
    \label{WIK2}
\end{figure}

The graph of~$g_\e$ is depicted in Figure~\ref{WIK2}.
The set~$F_{\e}$ is a useful barrier for the $r$-geometric
flow, according to the following calculation:

\begin{lemma}\label{LEMMA BAR92w0:01}
There exist~$M>1$ and~$c_0\in(0,1)$ such that if~$r\in\left(0,\frac1M\right)$
and~$\e\in \left(0,\frac{r}{M}\right)$ then
\begin{equation}\label{1}
\kappa_r(F_{\e},p)\ge c_0\qquad{\mbox{for any }}p=(p_1,p_2)\in\partial
F_{\e} {\mbox{ with }}|p_1|\le10.\end{equation}
\end{lemma}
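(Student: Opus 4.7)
The plan is to split $\partial F_\e\cap\{|p_1|\le 10\}$ into the three regimes dictated by $\varphi$: the parabolic window $|p_1|\le \rho$, the interpolation window $\rho\le |p_1|\le 2\rho$, and the outer region $2\rho\le |p_1|\le 10$. By the symmetry of $F_\e$ under reflection in each coordinate axis it suffices to treat points on the upper graph $p_2=g_\e(p_1)$ with $p_1\ge 0$. The first task is to record uniform bounds on the derivatives of $g_\e$. Writing $g=\psi A+(1-\psi)B$ with $\psi(x)=\varphi(x/\rho)$, $A(x)=x^2/(2M^2\rho)$, $B(x)=x/(M^2(1+x))$, and recalling $\rho=Mr$, one uses $|\psi'|\le 2/\rho$, $|\psi''|\le 2/\rho^2$ together with the elementary estimates $|A-B|\le C\rho/M^2$ and $|A'-B'|\le C/M^2$ on the interpolation window to derive
\begin{equation*}
|g_\e'(x)|\le \frac{C}{M^2},\qquad |g_\e''(x)|\le \frac{C}{M^3 r}\qquad\text{for all }|x|\le 10,
\end{equation*}
with an absolute constant $C$. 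In particular, the signed curvature of the upper graph satisfies $|\kappa(\partial F_\e,p)|\le 2C/(M^3 r)$, which is much smaller than $1/r$ as soon as $M$ is large.

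Next I verify that the externally tangent ball $B_{r,p}^{\mathrm{ext}}$ is contained in $\R^2\setminus F_\e$. Since $|g_\e'|=O(1/M^2)$ the outer unit normal is almost vertical, so the center of this ball sits essentially at height $g_\e(p_1)+r$ and its vertical extent lies in $[g_\e(p_1),\,g_\e(p_1)+2r]\subset\{y>0\}$; this rules out any collision with the lower boundary $y=-g_\e(x)$. What remains is the comparison with the upper graph in the horizontal window $[p_1-r,\,p_1+r]$, which I will handle by combining the convexity inequality $r-\sqrt{r^2-s^2}\ge s^2/(2r)$ for the lower semicircle of the ball with the Taylor bound $g_\e(x)-g_\e(p_1)-g_\e'(p_1)(x-p_1)=O((x-p_1)^2/(M^3 r))$ for the graph. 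This yields
\begin{equation*}
\kappa_r^+(F_\e,p)=\frac{\kappa(F_\e,p)}{2}+\frac{1}{2r}\ge -\frac{C}{M^3 r}+\frac{1}{2r}\ge \frac{1}{4r}
\end{equation*}
once $M$ is large enough.

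For the internal ball, the necessary condition for inclusion in $F_\e$ reduces (after a short computation) to $g_\e(p_1)\ge r/\sqrt{1+(g_\e'(p_1))^2}$, which is essentially $g_\e(p_1)\ge r$. Throughout $|p_1|\le 2\rho$ one has $g_\e(p_1)\le \e+2r/M<r$ as soon as $M\ge 4$, so the internal ball does not fit, $\kappa_r^-(F_\e,p)=0$ and hence $\kappa_r\ge 1/(4r)$. In the outer region $2\rho\le|p_1|\le 10$ the function $g$ is strictly concave, $g''(x)=-2/(M^2(1+x)^3)<0$, so $\kappa$ is positive and bounded below by $1/(1331\,M^2)$; if the internal ball fails to fit then $\kappa_r\ge 1/(4r)$ as before, while if it does fit then the two contributions telescope to $\kappa_r^++\kappa_r^-=\kappa\ge 1/(1331\,M^2)$. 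Choosing $c_0$ to be this positive constant completes the argument. I expect the principal difficulty to be the derivative bound in the interpolation window, where $g_\e''$ decomposes into the four terms $\psi''(A-B)$, $2\psi'(A'-B')$, $\psi A''$ and $(1-\psi)B''$ with no a priori sign, each of which must be controlled by $C/(M^3 r)$ in order that $|\kappa|$ stay well below $1/r$ and that the externally tangent ball remain disjoint from $F_\e$ over the entire interval $[p_1-r,p_1+r]$.
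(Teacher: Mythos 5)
Your proposal follows essentially the same route as the paper: the same decomposition into the parabolic, interpolation and outer regimes, the same derivative bounds $|g_\e'|\lesssim M^{-2}$ and $|g_\e''|\lesssim M^{-3}r^{-1}$ ensuring that $B_{r,p}^{\mathrm{ext}}$ lies in the complement, the same observation that $g_\e=O(r/M)$ forces $\kappa_r^-=0$ near the origin, and the same $\kappa_r^++\kappa_r^-=\kappa$ telescoping together with the concavity bound in the outer region. The one imprecision is your stated necessary condition $g_\e(p_1)\ge r/\sqrt{1+|g_\e'(p_1)|^2}$ for the interior ball, which is not quite the correct condition (it should also involve $g_\e$ at the abscissa of the ball's center; the paper instead exhibits the explicit point $q=p-\tfrac{3r}{2}\nu_{F_\e}(p)$ of $B_{r,p}^{\mathrm{int}}$ lying outside $F_\e$), but since all the quantities involved are $O(r/M)\ll r$ this does not affect the conclusion.
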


\begin{proof} By symmetry, we can reduce our analysis
to the first quadrant,
i.e. prove~\eqref{1}
for~$p=(p_1,p_2)\in\partial
F_{\e}$, with~$p_1\in[0,10]$ and~$p_2\in[0,+\infty)$.
For any~$x\in [0,10]$, it holds that
\begin{eqnarray*}
g'_\e(x)&=&\frac{x}{M^2\rho}\,\varphi\left(\frac{x}{\rho}\right)+
\frac{x^2}{2M^2\rho^2}\,\varphi'\left(\frac{x}{\rho}\right)\\&&\qquad
-\varphi'\left(\frac{x}{\rho}\right)\,\frac{x}{M^2\,\rho\,(1+x)} 
+\left( 1-\varphi\left(\frac{x}{\rho}\right)\right)\,\frac{1}{M^2\,(1+x)^2}\end{eqnarray*}
and
\begin{eqnarray*}
g''_\e(x)&=
&\frac{1}{M^2\rho}\,\varphi\left(\frac{x}{\rho}\right)+
\frac{2x}{M^2\rho^2}\,\varphi'\left(\frac{x}{\rho}\right)+
\frac{x^2}{2M^2\rho^3}\,\varphi''\left(\frac{x}{\rho}\right)\\&&\qquad
-\varphi''\left(\frac{x}{\rho}\right)\,\frac{x}{M^2\,\rho^2\,(1+x)} 
-\varphi'\left(\frac{x}{\rho}\right)\,\frac{2}{M^2\,\rho\,(1+x)^2} \\&&\qquad
-\left( 1-\varphi\left(\frac{x}{\rho}\right)\right)\,\frac{2}{M^2\,(1+x)^3}.
\end{eqnarray*}
Consequently, for any~$x\in[0,10]$,
\begin{equation}\label{CAG-1}
\begin{split}
|g''_\e(x)|\,&\le
\frac{1}{M^2\rho}+
\frac{8\rho}{M^2\rho^2}+
\frac{8\rho^2}{2M^2\rho^3}
+\frac{4\rho}{M^2\,\rho^2} +
\frac{4}{M^2\,\rho}+\frac{2}{M^2}
\\ &=\frac{(21+2\rho)}{M^2\,\rho}
\\ &=\frac{(21+2Mr)}{M^3\,r}
\\ &\leq\frac{23}{M^3\,r}
.\end{split}
\end{equation}
Also,
\begin{equation}\label{9e233302}
{\mbox{for any }}x\in[2\rho,10],\qquad
|g'_\e(x)| =
\frac{1}{M^2\,(1+x)^2}\le
\frac{1}{M^2}.
\end{equation}
Moreover,
\begin{equation}\label{0qeid0e8t}
{\mbox{for any }}x\in[2\rho,10],\qquad
g''_\e(x) =
-\frac{2}{M^2\,(1+x)^3} \in \left[ 
-\frac{2}{M^2},-\frac{2}{M^2\,11^3}\right].
\end{equation}
On the other hand,
\begin{equation}\label{0w2e}
\begin{split}
{\mbox{for any }}x\in[0,2\rho],\qquad
g_\e(x)\,&\le\e+
\frac{4\rho^2}{2M^2\rho}+
\frac{2\rho}{M^2}
\\ &=\e+\frac{4\rho}{M^2}\\&\le\frac{5r}{M}.
\end{split}
\end{equation}
In the same way, we see that
\begin{equation}\label{0-2e0304}
\begin{split}
{\mbox{for any }}x\in[0,2\rho],\qquad
|g'_\e(x)|\,&\le
\frac{2\rho}{M^2\rho}+
\frac{8\rho^2}{2M^2\rho^2}
+\frac{4\rho}{M^2\,\rho} 
+\frac{1}{M^2}\\&\le\frac{11}{M^2}.
\end{split}
\end{equation}
Now, since the curvature of the graph is given by
$$ -\left( \frac{g_\e'}{\sqrt{1+(g_\e')^2}}\right)'=-
\frac{g_\e''}{(1+(g_\e')^2)^{3/2}},$$
it follows from~\eqref{CAG-1} that 
\begin{equation}\label{0-edfru5755757}
{\mbox{the curvature of the graph is
bounded everywhere in absolute value by }}
\frac{23}{M^3\,r}
,\end{equation}
which is less than~$\frac1r$ if~$M$ is sufficiently large.
Hence, the set~$F_\e$ can always be touched from outside by balls
of radius~$r$, i.e.~$B_{r,p}^{\mathrm{ext}}\subseteq\R^2\setminus F_\e$
and so, by~\eqref{KAPPA},
\begin{equation}\label{lao2}
\kappa_r^+(F_\e,p)=\frac{\kappa(F_\e,p)}2+\frac{1}{2r}.\end{equation}
Similarly, from~\eqref{9e233302} and~\eqref{0qeid0e8t}, we infer that
\begin{equation}\label{80483489284580-edfru5755757}\begin{split}&
{\mbox{the curvature of the graph with abscissa in $[2\rho,10]$}}\\&{\mbox{is
bounded from below by }}\frac{1}{M^2\,11^3}
.\end{split}\end{equation}
Now, to compute~$\kappa_r^-(F_\e,p)$ we distinguish
the two cases~$p_1\in[0,2\rho]$ and~$p_1\in[2\rho,10]$.
If~$p_1\in[0,2\rho]$ we claim that
\begin{equation}\label{FUO}
B_{r,p}^{\mathrm{int}} \cap \{ y<-g_\e(x)\}\ne\emptyset.
\end{equation}
\begin{figure}
    \centering
    \includegraphics[width=14cm]{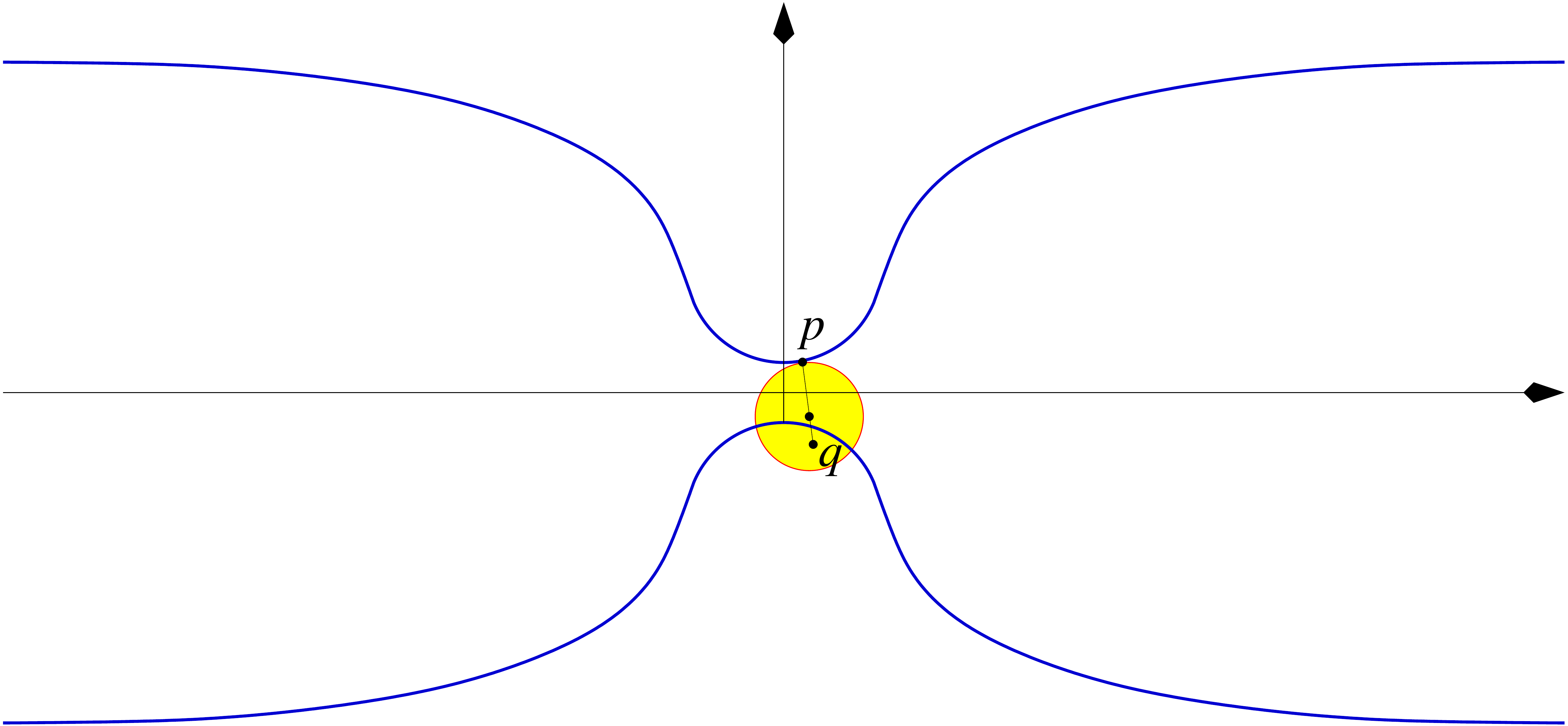}
    \caption{\em {{The geometry involved in the proof of~\eqref{FUO}.}}}
    \label{129345WIK2}
\end{figure}
To check this, we use Figure~\ref{129345WIK2}
and we notice that the exterior normal at~$p$ is given by
$$\nu_{F_\e}(p)=
\frac{(-g_\e'(p_1),1)}{ \sqrt{1+(g_\e'(p_1))^2}}$$ and
$$ q=(q_1,q_2):=p-r\nu_{F_\e}(p)-\frac{r\nu_{F_\e}(p)}{2}\in B_{r,p}^{\mathrm{int}}.$$
So, to prove~\eqref{FUO},
it is enough to check that
\begin{equation}\label{FUO2}
q_2<-g_\e(q_1).
\end{equation}
As a matter of fact, since~$p_2=g_\e(p_1)$,
\begin{eqnarray*}
q_2+g_\e(q_1) &=&
\left(p-\frac{3r\nu_{F_\e}(p)}{2}\right)_2+g_\e
\left( 
\left(p-\frac{3r\nu_{F_\e}(p)}{2}\right)_1\right)\\
&=&
g_\e(p_1)-\frac{3r}{2 \sqrt{1+(g_\e'(p_1))^2}}
+g_\e
\left(p_1+\frac{3r\,g_\e'(p_1)}{2\sqrt{1+(g_\e'(p_1))^2}}\right)\\
{\mbox{by \eqref{0-2e0304} }}
\qquad&\le& 2g_\e(p_1)
-\frac{3r}{2 \sqrt{1+(11/M^2)^2}}+\sup_{x\in[0,10]} |g'_\e(x)|\,
\frac{3r\,|g_\e'(p_1)|}{2\sqrt{1+(g_\e'(p_1))^2}}\\
{\mbox{by \eqref{9e233302}, \eqref{0w2e} and \eqref{0-2e0304} }}
\qquad&\le& \frac{10\,r}M
-\frac{3r}{2 \sqrt{1+(11/M^2)^2}}
+\left(\frac{11}{M^2}\right)^2
\,\frac{3r}{2}
\\ &\le& \frac{r}2-r,
\end{eqnarray*}
as long as~$M$ is sufficiently large.
This proves~\eqref{FUO2}, and so~\eqref{FUO}.

Then, from~\eqref{KAPPA} and~\eqref{FUO}, we obtain that
$$ {\mbox{if $p_2\in[0,2\rho]$, then $\kappa^-_r(F_\e,p)=0$.}}$$
{F}rom this and~\eqref{lao2}, we conclude that
\begin{equation*}
{\mbox{if $p_2\in[0,2\rho]$, then }}
\kappa_r(F_\e,p)=\frac{\kappa(F_\e,p)}2+\frac{1}{2r}.\end{equation*}
This and~\eqref{0-edfru5755757} imply that
\begin{equation}\label{23402-1-3232-4}
{\mbox{if $p_2\in[0,2\rho]$, then }}
\kappa_r(F_\e,p)\ge
-\frac{23}{2\,M^3\,r}
+\frac{1}{2r}\ge\frac{1}{3r},\end{equation}
as long as~$M$ is large enough.

On the other hand, by~\eqref{80483489284580-edfru5755757},
we know that
$$ {\mbox{if $p_2\in[2\rho,10]$, then }}\kappa(F_\e,p)\ge
\frac{1}{M^2\,11^3}$$
and therefore, recalling~\eqref{KAPPA} and~\eqref{lao2},
we have that
\begin{eqnarray*}
{\mbox{if $p_2\in[2\rho,10]$, then }}\kappa_r(F_\e,p)&=&
\kappa_r^+(F_\e,p)+\kappa_r^-(F_\e,p) \\&\ge&
\frac{\kappa(F_\e,p)}2+\frac{1}{2r} +\min\left\{
0,\;\frac{\kappa(F_\e,p)}2-\frac{1}{2r}
\right\}\\
&=&
\min\left\{\frac{\kappa(F_\e,p)}2+\frac{1}{2r},\;
\kappa(F_\e,p)
\right\}
\\ &\ge& \frac{1}{2\,M^2\,11^3}.\end{eqnarray*}
The desired result now follows plainly from this inequality and~\eqref{23402-1-3232-4}.
\end{proof}

\subsection{Completion of the proof of Theorem~\ref{TH2}}

With the construction in Lemma~\ref{LEMMA BAR92w0:01},
the proof of Theorem~\ref{TH2} follows by a comparison principle, with
an argument similar to that in Section~\ref{SECla21}.
We give the full argument for the facility of the reader.
We take~$M$ and~$c_0$ as in Lemma~\ref{LEMMA BAR92w0:01}
and we define, for any~$t\ge0$,
$$ \e(t):=\frac{r}{2M}-\frac{c_0\,t}{2}.$$
The set~$F_{\e(t)}$ falls under the assumption of Lemma~\ref{LEMMA BAR92w0:01},
so, by~\eqref{1},
\begin{equation}\label{78} \kappa_r(F_{\e(t)},p)\ge c_0\qquad{\mbox{for any }}p=(p_1,p_2)\in\partial
F_{\e(t)} {\mbox{ with }}|p_1|\le10.\end{equation}
Moreover, using balls
of radius~$r/2$ centered at points~$p=(p_1,0)$ with~$|p_1|$ large, we see
that the portion of the barrier~$F_{\e(t)}$ coming from infinity
does not collapse instantaneously.

Now we set~$q_\pm:=(\pm 3,0)$ and
$$ N:= [-3,3]\times \left[ -\frac{\e(0)}2,\frac{\e(0)}2 \right],$$ and
we take~$R>0$ and a connected and smooth set~$E\subset F_{\e(0)}$
such that
$$ E\supseteq B_{R}(q_-)\cup B_R(q_+)\cup N.$$

\begin{figure}
    \centering
    \includegraphics[width=12cm]{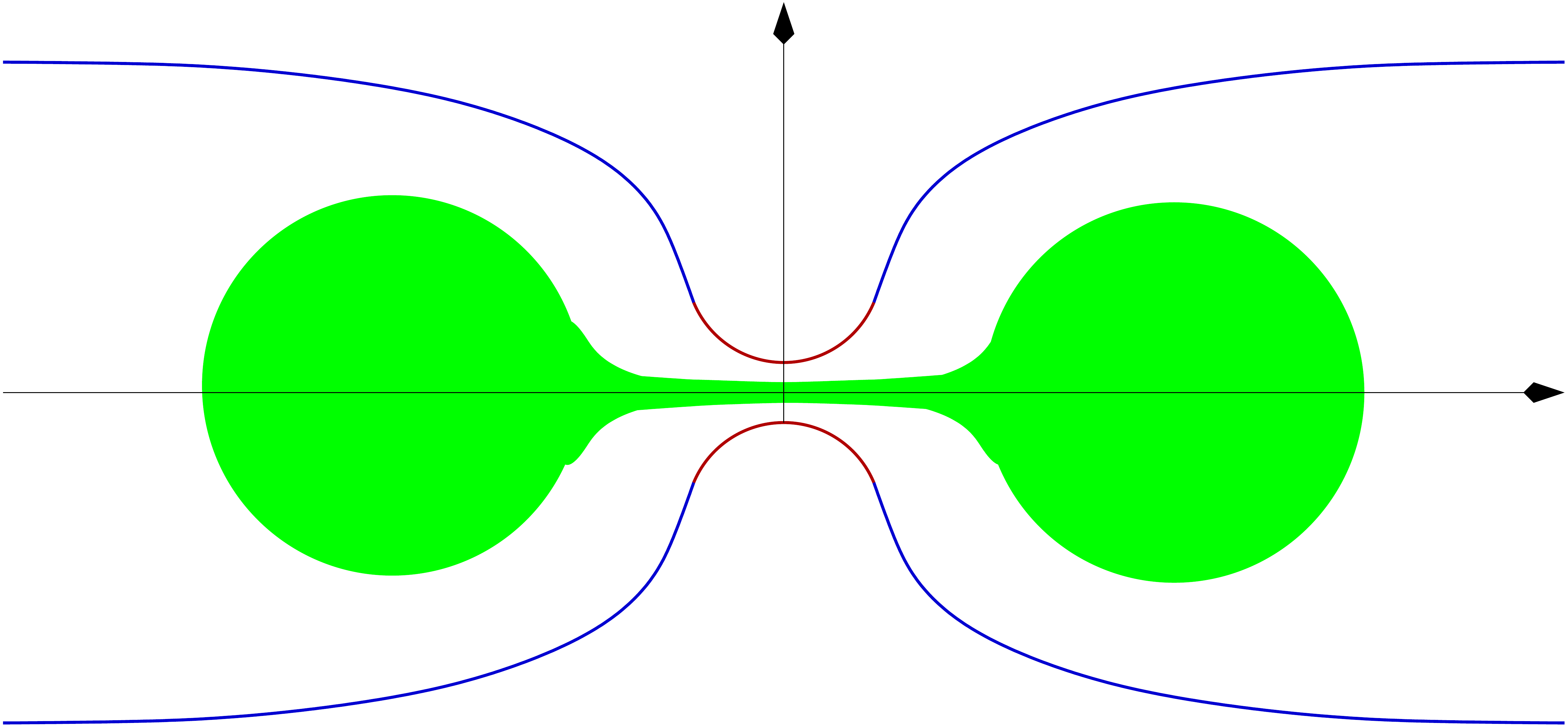}
    \caption{\em {{The sets~$E$ and~$F_{\e(0)}$.}}}
    \label{WIK}
\end{figure}

The geometric situation of this proof is depicted in
Figure~\ref{WIK}.
Notice that we can take such~$R$ independent of~$r$
and
the modulus of the velocity of~$F_{\e(t)}$
is~$\frac{c_0}{2}$, which is less than the normal velocity of the flow~\eqref{FLOW},
thanks to~\eqref{78}. Therefore, by comparison principle (see e.g. Section~3.3
in~\cite{MR3023439}),
we conclude that~$E_t\subset\{ |x|\le10\}$ and, more importantly,
$E_t\subset F_{\e(t)}$ for all~$t\in[0,T)$, being~$T$ the extinction
time. Since the extinction time~$T$ is bounded from below by the one of the ball~$B_R$,
which is independent of~$r$, we can take~$r$ sufficiently small
and suppose that~$\frac{r}{c_0\,M}<T$.
But, since~$F_{\e(t)}$ develops a neckpinch
at time~$t=\frac{r}{c_0\,M}$, also any viscosity evolution of the
set~$E_t$ develops a singularity before this time, and gets disconnected.
This completes the proof of Theorem~\ref{TH2}.

\section{Proof of Theorem \ref{TH3}}\label{PAO3}

We compute the evolution of the curvature of a geometric flow with normal
velocity~$v$, assuming that
such evolution is $C^1$ in time and $C^2$ in space.
For this, we denote by~$s$ the arclength variable
and we recall (see e.g. formula~(9) in~\cite{MR2863468})
that if a set~$F_t$ is a solution of~$\partial_t x_t\cdot\nu_{F_t}(x_t)=-v(x_t)$
then
\begin{equation}\label{902e3reru}
\partial_t \kappa(F_t,x_t)=\partial_{ss}^2 v(x_t)+\kappa^2(F_t,x_t)\,v(x_t).
\end{equation}
Furthermore, comparing with halfplanes, we see that the evolution of~$E_t$
is $r$-thin for any time~$t$ (till extinction). 
Therefore, $E_t$ does not contain balls of radius~$r$
and then, in view of~\eqref{KAPPA}, it holds that
\begin{equation}\label{0193e4-12349}
\kappa_r^-(E_t,x_t)=0\quad{\mbox{ for any }}x_t\in\partial E_t.
\end{equation}
Now, suppose that~$E_t$ is convex: it follows from~\eqref{KAPPA}
that
$$ \kappa_r^+(E_t,x_t)=\frac{\kappa(E_t,x_t)}2+\frac{1}{2r}.$$
This and~\eqref{0193e4-12349} give
that
$$ \kappa_r(E_t,x_t)=\frac{\kappa(E_t,x_t)}2+\frac{1}{2r}.$$
As a consequence,
\begin{eqnarray*}&&
\kappa_r(E_t,x_t)\ge\frac{1}{2r}\\{\mbox{and }}&&\partial^2_{ss}
\kappa_r(E_t,x_t)=\frac{\partial^2_{ss}\kappa(E_t,x_t)}2.
\end{eqnarray*}
In particular, by~\eqref{902e3reru},
we have that, if~$E_t$ is a solution of the geometric
flow in~\eqref{FLOW}, till it is convex it holds that
\begin{equation*}
\begin{split}
\partial_t \kappa(E_t,x_t)\,&=\partial_{ss}^2 \kappa_r(E_t,x_t)+\kappa^2(E_t,x_t)\,
\kappa_r(E_t,x_t)\\
&= \frac{\partial^2_{ss}\kappa(E_t,x_t)}2+\kappa^2(E_t,x_t)\,
\kappa_r(E_t,x_t).
\end{split}
\end{equation*}
Hence, if~$x_t^\star$ is minimal for~$\kappa(E_t,\cdot)$, we have that
$$
\partial^2_{ss}\kappa(E_t,x_t^\star)
\ge0$$
and
$$ \partial_t \kappa(E_t,x_t)\ge \kappa^2(E_t,x_t)\kappa_r(E_t,x_t)\ge0.$$
This gives that~$\kappa(E_t,\cdot)$ is nondecreasing at the minimal points,
and thus nonnegative, which completes the proof of Theorem \ref{TH3}.

\section{Proof of Theorem \ref{TH4}}\label{PTH4}

\begin{figure}
    \centering
    \includegraphics[width=9cm]{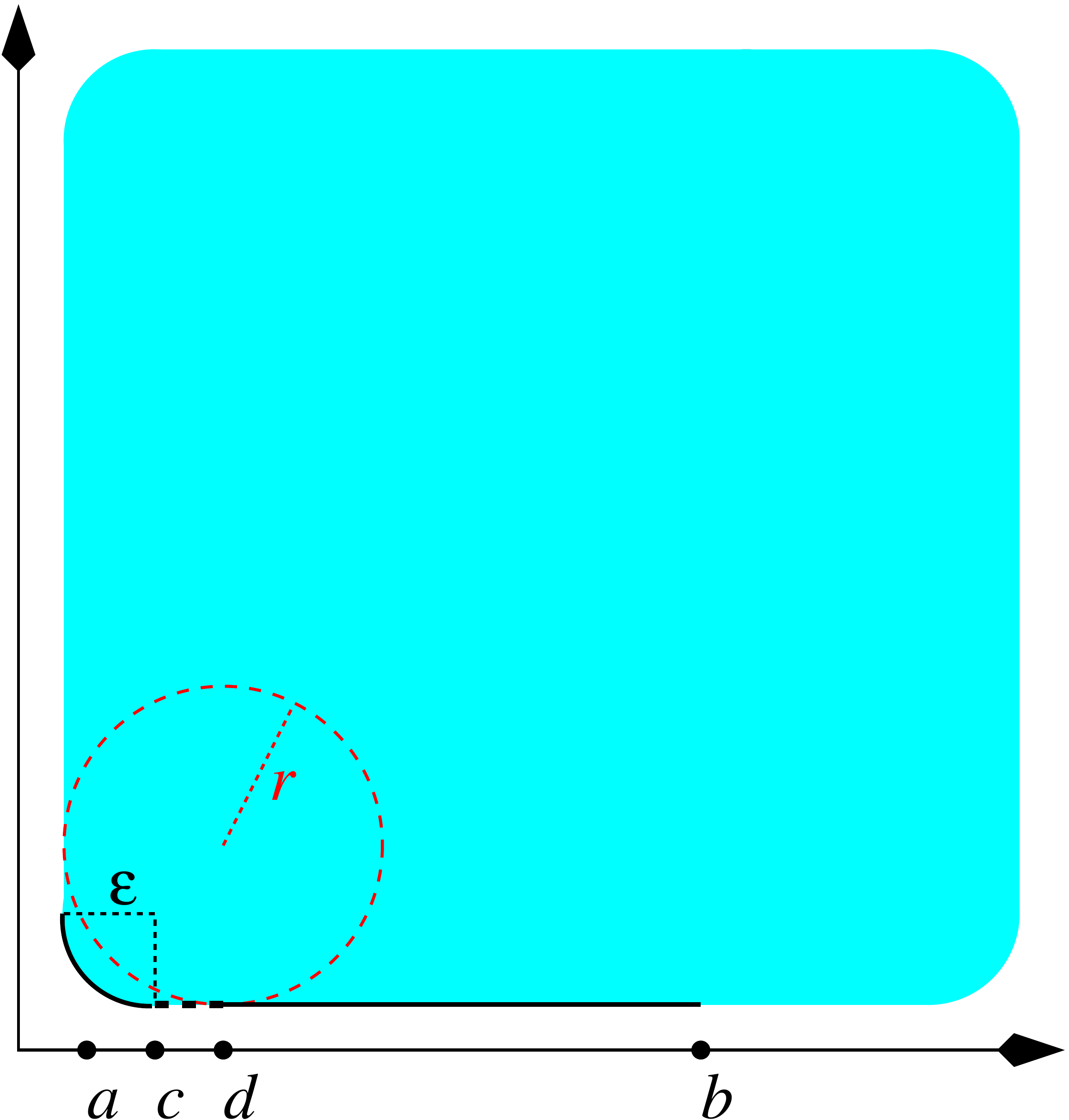}
    \caption{\em {{The set in the proof of Theorem~\ref{TH4}.}}}
    \label{W823495-123I77-BIS}
\end{figure}

We construct a convex set~$E$ as depicted in Figure~\ref{W823495-123I77-BIS}.
Namely, we consider the smoothing of a square
of side~$1>> r$
in which the corners are rounded by a curve with curvature of order~$\e<< r$.
Notice that the curvature of~$\partial E$
vanishes along the dashed and solid line on the bottom of
Figure~\ref{W823495-123I77-BIS}, and it is of the order of~$1/\e$
along the solid arc. As for the $r$-curvature, from~\eqref{KAPPA}
we see that~$\kappa_r$ is equal to~$\kappa$ along 
the solid line and to~$\frac\kappa2+\frac1{2r}$
along the dashed line and the solid arc.
That is,~$\kappa_r$ is equal to~$0$ along 
the solid line, equal to~$\frac1{2r}$
along the dashed line
and of order~$\frac1\e$ along\footnote{It is interesting to remark
that Figure~\ref{W823495-123I77-BIS} well explains
the difference between the $r$-curvature and the classical one
for convex sets.
Namely, along the solid line, we are on a ``large scale'' and
the $r$-curvature coincides with the classical one.
Then, on a scale of order~$r$, given by the dashed line,
the $r$-curvature becomes greater than the classical one.
But on a very small scale, 
the $r$-curvature may become smaller than the classical one,
since, on 
the solid arc,
we have that~$\kappa_r=
\frac\kappa2+\frac1{2r} <\kappa$, since~$\kappa\sim\frac1\e\gg\frac1{r}$.}
the solid arc.

Consequently, if we put Cartesian axes as in Figure~\ref{W823495-123I77-BIS},
we can describe~$\kappa_r$ along the bottom of the set~$E$ by a function~$\varphi$,
and,
considering intervals~$(a,b)\supset(c,d)$ as in 
Figure~\ref{W823495-123I77-BIS},
it holds that~$\varphi=\frac1{2r}$ on~$(c,d)$
and~$\varphi=0$ on~$(d,b)$. In particular,
\begin{equation}\label{NOTC}
{\mbox{$\varphi$ is not convex.}}
\end{equation}
Now, to establish Theorem~\ref{TH4} we argue by contradiction
and suppose that the set~$E$ evolves in time~$t\in[0,T)$
into a convex set~$E_t$.
We point out that the set
\begin{equation}\label{9GRAP}
{\mbox{$\partial E_t\cap\big(\R\times(-h,h)\big)$
is a graph in the vertical direction for all~$t\in[0,T)$,}}\end{equation} provided that~$h$, $T>0$
are chosen appropriately small.
Indeed, if not, a vertical segment would meet~$\partial E_t\cap\big(\R\times(-h,h)\big)$
at least twice; then, since another intersection with~$\partial E_t$ must occur
close to the top, this would contradict the convexity of~$E_t$.

Hence, as a consequence of~\eqref{9GRAP},
the boundary of the set~$E_t$
can be locally described, in the vicinity of the interval~$(a,b)$
by a convex
function~$\Psi=\Psi(x,t)$. By~\eqref{FLOW}
and the fact that
\begin{equation}\label{92013l10}
{\mbox{$\Psi(x,0)$ is constant for all~$x\in(c,b)$,}}
\end{equation}
we know that the velocity of the flow at time~$0$ coincides
with~$\partial_t\Psi(x,0)$, hence
\begin{equation}\label{901-2-1e2}
\partial_t\Psi(x,0)=\varphi(x)\quad{\mbox{ for all~$x\in(c,b)$}}.\end{equation}
Also, since~$\Psi$ is convex,
for any~$x$, $y\in(c,b)$, any~$\vartheta\in[0,1]$ and any~$t\in(0,T)$
we have that
\begin{equation*} 
\Psi\big((1-\vartheta)x+\vartheta y,t\big)\le
(1-\vartheta)\Psi(x,t)+\vartheta\Psi(y,t)
,\end{equation*}
and therefore, in view of~\eqref{92013l10}
and~\eqref{901-2-1e2},
\begin{eqnarray*}
\varphi( (1-\vartheta)x+\vartheta y )&=&
\partial_t
\Psi\big((1-\vartheta)x+\vartheta y,0\big)\\&
=&
\lim_{t\searrow0}
\frac{\Psi\big((1-\vartheta)x+\vartheta y,t\big)-
\Psi\big((1-\vartheta)x+\vartheta y,0\big)}{t}
\\&\le&\lim_{t\searrow0}\frac{
(1-\vartheta)\Psi(x,t)+\vartheta\Psi(y,t)}t-
\frac{
\Psi\big((1-\vartheta)x+\vartheta y,0\big)}{t}
\\&=&
\lim_{t\searrow0}\frac{
(1-\vartheta)\Psi(x,t)+\vartheta\Psi(y,t)}t-
\frac{(1-\vartheta)
\Psi\big(x,0\big)
+\vartheta
\Psi\big(y,0\big)
}{t}
\\&
=&\lim_{t\searrow0}\frac{
(1-\vartheta)\big(\Psi(x,t)-\Psi(x,0)\big)
+\vartheta\big(\Psi(y,t)-\Psi(y,0)\big)}t
\\&=&
(1-\vartheta)\partial_t
\Psi(x,0)+\vartheta\partial_t
\Psi(y,0)\\&
=&(1-\vartheta)\varphi(x)+\vartheta\varphi(y).
\end{eqnarray*}
This gives that~$\varphi$ is convex in~$(c,b)$,
which is a contradiction with~\eqref{NOTC} and so Theorem~\ref{TH4}
is proved.

\section{Proof of Theorem~\ref{TW}}\label{TWA}

Without loss of generality,
we can normalize the speed~$c$ to be equal to~$1$.
Such dilation, in the new coordinate frame,
transforms the condition~$c\in (0,r)$
into
\begin{equation}\label{r1}
r>1.\end{equation}
Then, if~$h$ is a traveling wave as in~\eqref{TWW} with~$c=1$ for a geometric flow
with inner normal velocity~$v$, one sees that~$h$ is a solution of
\begin{equation}\label{VB-1}
v(x)\,\sqrt{1+|h'(x)|^2}=1.
\end{equation}
In particular, for the classical mean curvature flow, we have that
$$v(x)=\frac{h''(x)}{
(1+|h'(x)|^2)^{3/2}},$$
and so~\eqref{VB-1} becomes
\begin{equation}\label{VB-2}
\frac{h''(x)}{1+|h'(x)|^2}=1.
\end{equation}
Similarly, in the regime in which~$\kappa_r=\frac\kappa2+\frac1{2r}$,
the flow in~\eqref{FLOW} and~\eqref{VB-1} yield the equation
\begin{equation}\label{VB-3}
\frac{h''(x)}{1+|h'(x)|^2}=2-\frac{\sqrt{1+|h'(x)|^2}}{r}.
\end{equation}
Our objective is now to consider a (suitable translation of a)
solution~$h_\infty$
of~\eqref{VB-2} (far from the origin)
and glue it to a solution~$h_0$ of~\eqref{VB-3} (near the origin).
The joint will be done in such a way that the final curve is~$C^{1,1}$
(roughly speaking, the building arcs will share the tangent
line at the matching point).

To implement this construction, we consider the Cauchy problem
\begin{equation}\label{SDXx} \left\{ \begin{matrix}
\phi'(x)=2(1+|\phi(x)|^2)-\displaystyle\frac1r\,(1+|\phi(x)|^2)^{3/2},\\
\phi(0)=0.\end{matrix}
\right.\end{equation}
The solution to this problem exists (and it is
unique) for small values of~$x$,
and we extend it to its largest existence interval~$(x_-,x_+)$,
with~$-\infty\le x_-<0<x_+\le+\infty$.
We claim that
\begin{equation}\label{SDX}
{\mbox{for all~$x\in(x_-,x_+)$,
we have that~$\phi(x)\le\sqrt{4r^2-1}$}}.\end{equation}
Indeed, suppose, by contradiction, that there exists~$\bar x\in(x_-,x_+)$
such that~$\phi(\bar x)>\sqrt{4r^2-1}$. Then, for any~$\e>0$ sufficiently
small, there exists~${\bar x_\e}$ on the segment joining~$0$ to~$\bar x$
such that~$\phi({\bar x_\e})=\sqrt{4r^2(1+\e)-1}$ with~$\phi'({\bar x_\e})\ge0$.
Then, we have that
\begin{eqnarray*}
0 &\le&
\phi'({\bar x_\e})\\&=&
2(1+|\phi({\bar x_\e})|^2)-\frac1r\,(1+|\phi({\bar x_\e})|^2)^{3/2}\\
&=& 2(4r^2(1+\e))-\frac1r\,(4r^2(1+\e))^{3/2}\\&=&
8r^2(1+\e)-8r^2(1+\e)^{3/2}\\&=&
8r^2(1+\e)(1-\sqrt{1+\e})
\\ &<&0,
\end{eqnarray*}
which is a contradiction, proving~\eqref{SDX}.

We also have that
\begin{equation}\label{SDX2}
{\mbox{for all~$x\in(x_-,x_+)$,
it holds that~$\phi(x)\ge0$}}.\end{equation}
Indeed, suppose, by contradiction, that
for any~$\e>0$ small enough there exists~$\tilde x_\e\in(x_-,x_+)$
such that~$\phi({\tilde x_\e})=-\e$ with~$\phi'({\tilde x_\e})\le0$.
Then, it holds that
\begin{eqnarray*}
0 &\ge&
\phi'({\tilde x_\e})\\&=&
2(1+|\phi({\tilde x_\e})|^2)-\frac1r\,(1+|\phi({\tilde x_\e})|^2)^{3/2}\\
&=&
2(1+\e^2)-\frac1r\,(1+\e^2)^{3/2}.
\end{eqnarray*}
Hence, taking~$\e$ arbitrarily small, we obtain that~$0\ge2-\frac1r$,
which gives a contradiction with~\eqref{r1}, thus proving~\eqref{SDX2}.

As a consequence of~\eqref{SDX} and~\eqref{SDX2}, we obtain that~$x_+=+\infty$ and~$x_-=-\infty$,
and so~$\phi$ is a global solution of~\eqref{SDXx}. In addition,
since~$x\mapsto -\phi(-x)$ is also a solution of~\eqref{SDXx},
by the uniqueness result
of the Cauchy problem we obtain that~$\phi(x)=-\phi(-x)$,
i.e.
\begin{equation}\label{odd111}
{\mbox{$\phi$ is odd.}}\end{equation}
Moreover, from~\eqref{SDX}, we have
that
$$2-\frac1r\sqrt{1+|\phi(x)|^2}\ge0$$ and so, by~\eqref{SDXx},
$$ \phi'(x)=(1+|\phi(x)|^2)\left(2-\displaystyle\frac1r\sqrt{1+|\phi(x)|^2}\right)\ge0.$$
Accordingly, 
\begin{equation}\label{MONPH1}
{\mbox{$\phi$ is monotone nondecreasing}}\end{equation}
and so the following limit exists
$$ \ell:=\lim_{x\to+\infty}\phi(x).$$
Also, by~\eqref{SDXx}, \eqref{SDX} and~\eqref{MONPH1}, it holds that
$$ 0=\lim_{x\to+\infty}\phi'(x)=
2(1+\ell^2)-\frac1r\,(1+\ell^2)^{3/2},$$
and therefore
\begin{equation*}
\lim_{x\to+\infty}\phi(x)=\ell=\sqrt{4r^2-1}.\end{equation*}
We also define
\begin{equation}\label{hzerosol}
h_0(x):=\int_0^x \phi(\xi)\,d\xi.\end{equation}
Notice that~$h_0$ is convex, in view of the monotonicity of~$\phi$,
and it is even, due to~\eqref{odd111}.
Also, by~\eqref{SDXx}, we know that
\begin{equation*} \left\{ \begin{matrix}h_0''(x)=2(1+
|h_0'(x)|^2)-\displaystyle\frac1r\,(1+|h_0'(x)|^2)^{3/2},\\
h_0(0)=0=h_0'(0),\end{matrix}
\right.\end{equation*}
and so~$h_0$ is a solution of~\eqref{VB-3}.

\begin{figure}
    \centering
    \includegraphics[width=12cm]{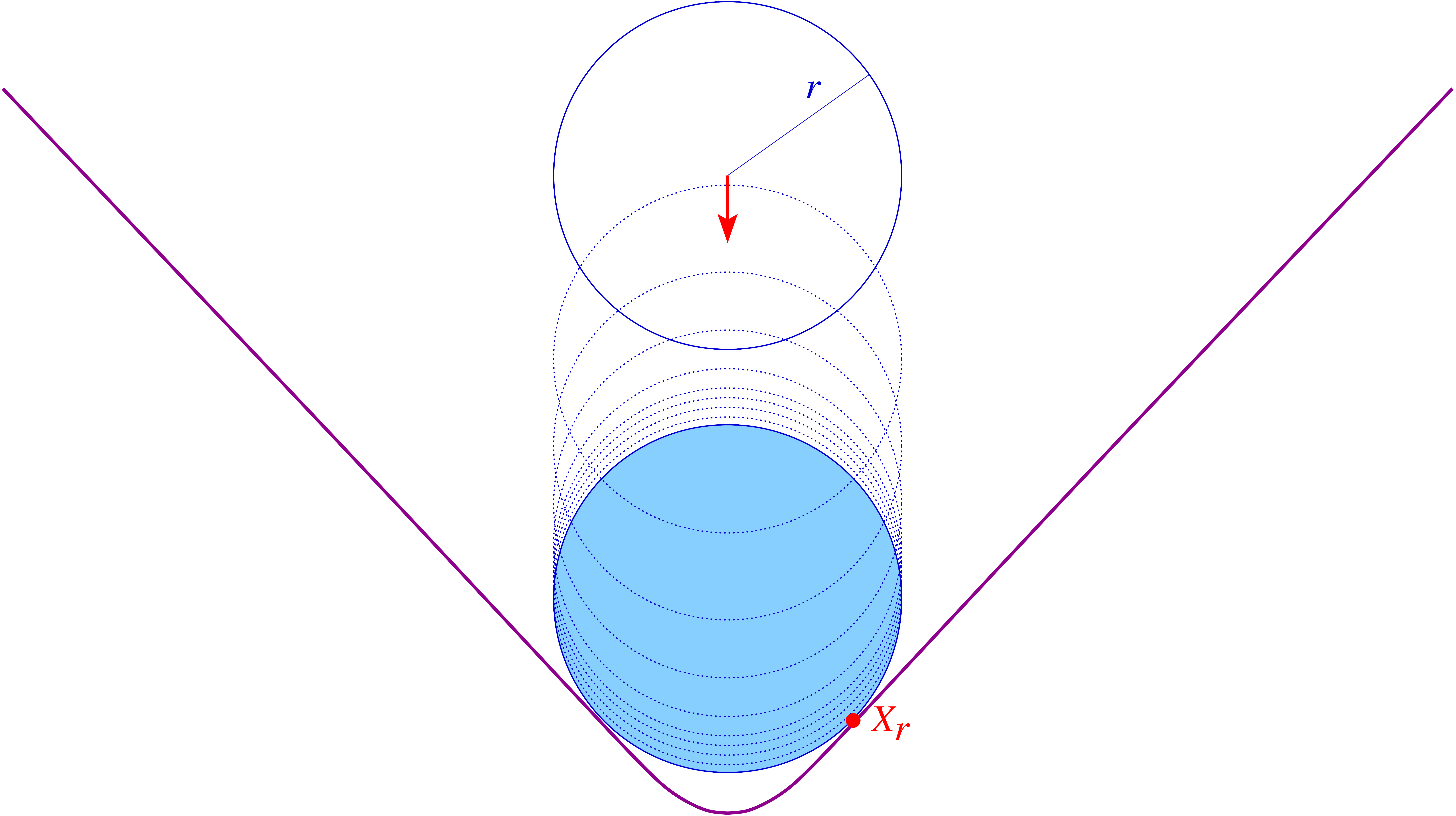}
    \caption{\em {{Dropping balls inside the supergraph of~$h_0$.}}}
    \label{lala}
\end{figure}

The idea is now to use a ``dropping the ball in the basket'' method, see Figure~\ref{lala}.
Namely, we slide a ball~$B_r(te_n)$ from $t=+\infty$ downwards, till
it touches the graph of~$h_0$. We denote by~$X_r=(x_r,h_0(x_r))$ a touching point
situated on the right branch
of the graph of~$h_0$. 
We also denote
the curvature of the supergraph of~$h_0$ at a point~$X=(x,h_0(x))$ by~$\kappa_0(x)$.
Since the supergraph of~$h_0$ contains a ball of radius~$r$
tangent at~$x_r$, we have that
\begin{equation}\label{AK-1-1}
\kappa_0(x_r)\le\frac1r.
\end{equation}
On the other hand, by~\eqref{VB-3},
\begin{equation}\label{AK-1-2}
\kappa_0(x)
=\frac{h''_0(x)}{(1+|h'_0(x)|^2)^{3/2}}=\frac1{\sqrt{1+|h'_0(x)|^2}}\left(
2-\frac{\sqrt{1+|h'_0(x)|^2}}{r}\right)=\frac2{\sqrt{1+|h'_0(x)|^2}}-\frac1r.
\end{equation}
By~\eqref{AK-1-1} and~\eqref{AK-1-2}, it follows that
$$ \frac1r\ge \kappa_0(x_r)=\frac2{\sqrt{1+|h'_0(x_r)|^2}}-\frac1r$$
and therefore
\begin{equation}\label{MAsIS}
h_0'(x_r)\ge\sqrt{r^2-1}.
\end{equation}
Now we consider the ``standard grim reaper''
\begin{equation}\label{GRIM}\left[-\frac\pi2,\frac\pi2\right]\ni x\mapsto
h_\infty(x)=-\log(\cos x),\end{equation}
and we define
\begin{equation}\label{jfbbfbg}
\tilde x_r:=\arctan h_0'(x_r).
\end{equation}
Notice that,
in view of~\eqref{r1},
we have that~$\tilde x_r
\in\left(0,\frac\pi2\right)$.
We also set
\begin{equation}\label{hstar} h_\star(x):=
\left\{\begin{matrix}
h_0(x) & {\mbox{ if }} |x|\le x_r,\\
h_\infty(x+\tilde x_r-x_r)-h_\infty(\tilde x_r)+h_0(x_r)
& {\mbox{ if }} x\in\left( x_r, \,\displaystyle\frac\pi2+x_r-\tilde x_r\right),\\
h_\infty(x-\tilde x_r+x_r)-h_\infty(\tilde x_r)+h_0(x_r)
& {\mbox{ if }} x\in\left(-
\displaystyle\frac\pi2+\tilde x_r- x_r
,\,- x_r\right).
\end{matrix}
\right.\end{equation}
The function~$h_\star$ is depicted in Figure~\ref{NEWG}.
By inspection, we see that~$h_\star$ is continuous and even.
Also, by \eqref{GRIM} and~\eqref{jfbbfbg},
we have that
$$ h_\infty'(\tilde x_r)=\tan\tilde x_r=h_0'(x_r),$$
and so~$h_\star\in C^{1,1}(\R)$. 

\begin{figure}
    \centering
    \includegraphics[width=12cm]{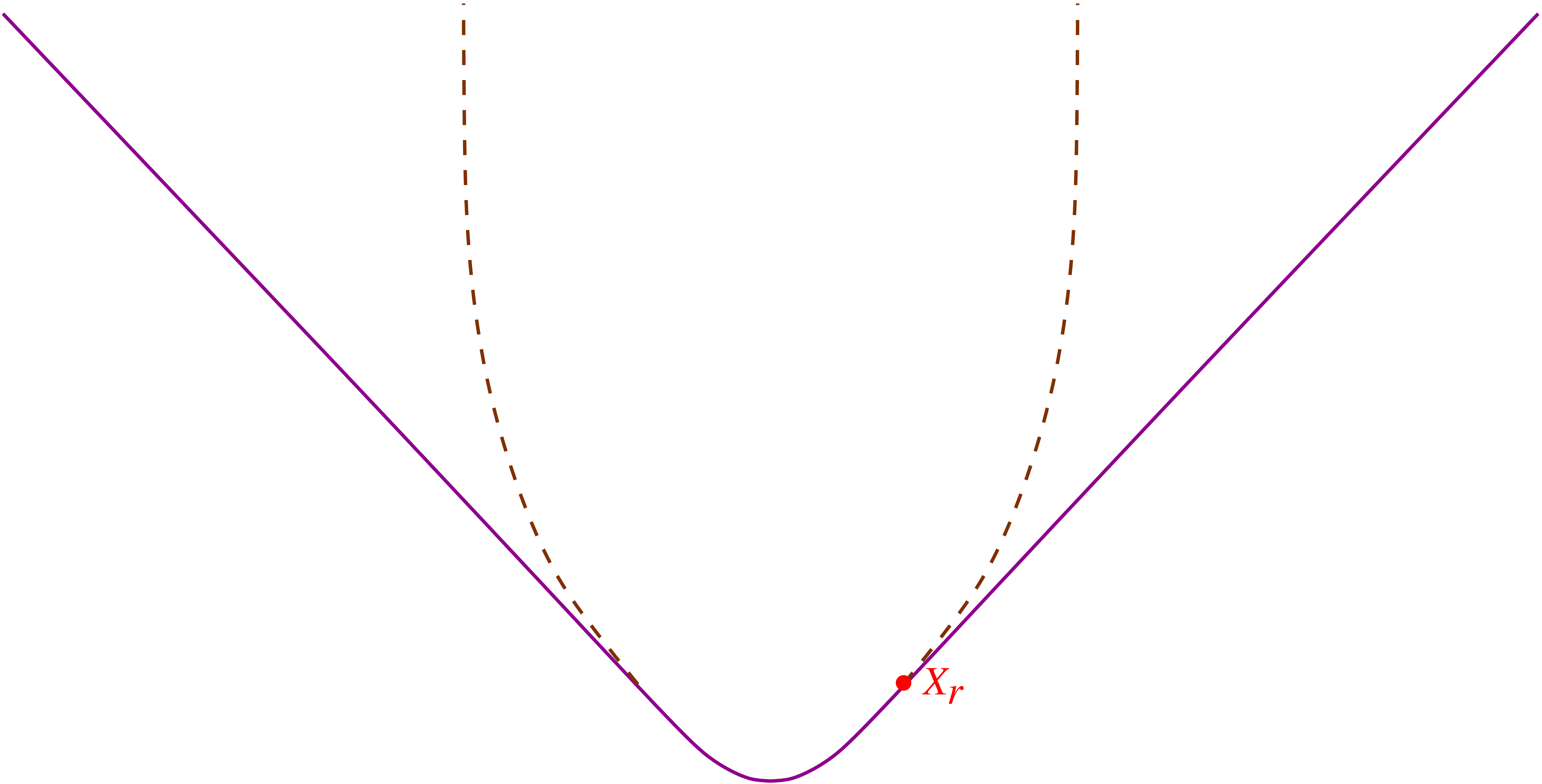}
    \caption{\em {{The new traveling wave in~\eqref{hstar}.
    The dashed curves are branches of classical grim reapers. The solid 
    curve represents the graph of~$h_0$.}}}
    \label{NEWG}
\end{figure}

It is also useful to observe that
\begin{equation}\label{UNIQUE}
{\mbox{the touching point~$X_r$ on the right branch of~$h_0$ is unique.}}\end{equation}
Indeed, 
from~\eqref{AK-1-2} and the monotonicity of~$h_0'=\phi$ it follows
that~$\kappa_0(x)\le\kappa_0(x_r)$ for any~$x>x_r$. This and~\eqref{AK-1-1}
yield that~$\kappa_0(x)\le 1/r$ for any~$x>x_r$, from which it follows that
there cannot be another touching point on the right branch of~$h_0$
with~$x>x_r$. On the other hand, if there was another touching point~$\tilde{X}_r$
with~$\tilde{x}_r<x_r$, then this observation would also imply that
there cannot be a touching point on the right
branch of~$h_0$ with~$x>\tilde{x}_r$, which gives a contradiction
and completes the proof of~\eqref{UNIQUE}.

We observe that, by~\eqref{r1} and~\eqref{AK-1-2},
$$ \kappa_0(0)=\frac2{\sqrt{1+|h'_0(0)|^2}}-\frac1r=2-\frac1r>1>\frac1r
$$
and this implies that the touching point cannot occur at the origin, namely~$x_r>0$.

\begin{figure}
    \centering
    \includegraphics[width=12cm]{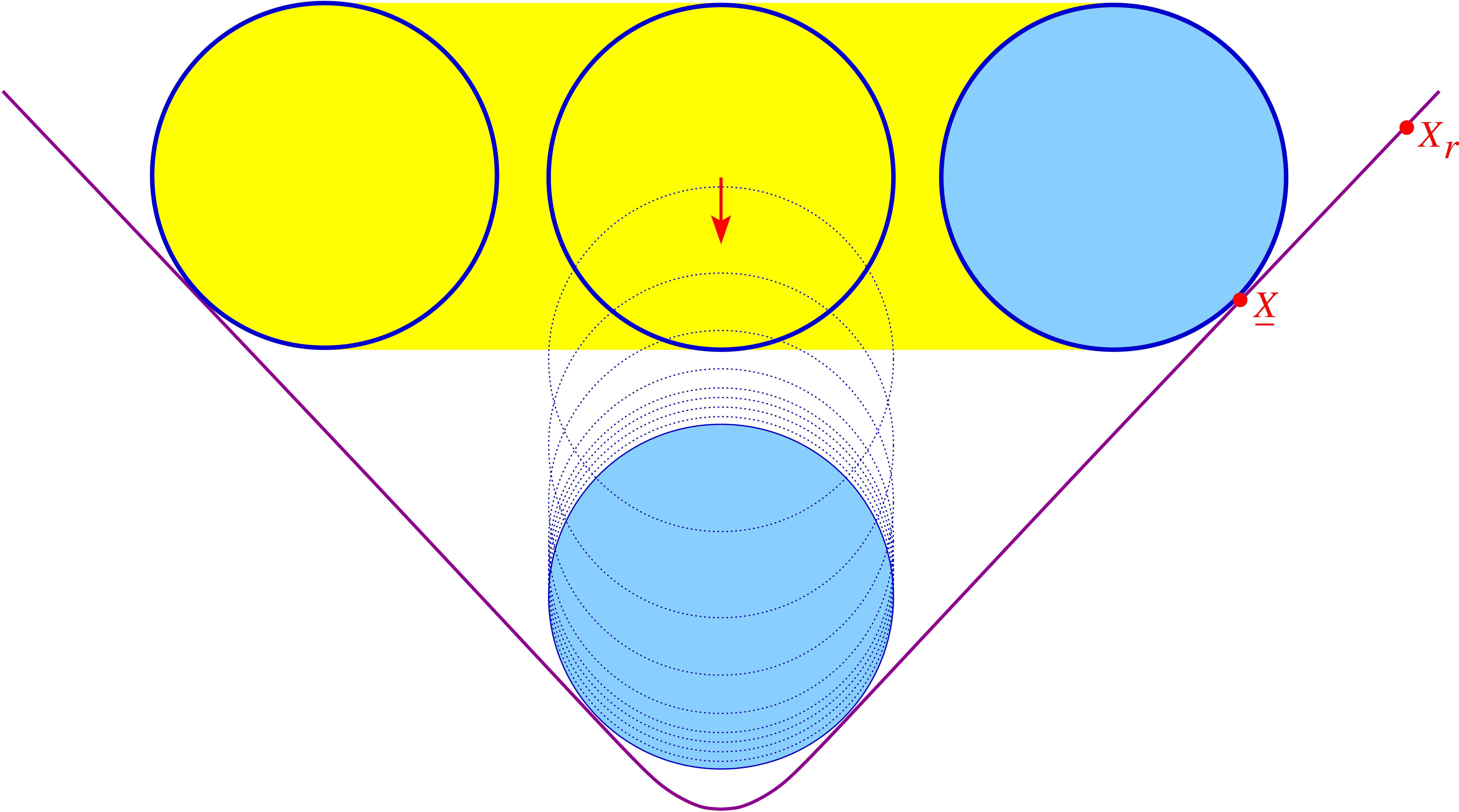}
    \caption{\em {{Proof of~\eqref{NPD}.}}}
    \label{lala2}
\end{figure}

Furthermore, 
\begin{equation}\label{NPD}
\begin{split}&
{\mbox{the graph of~$h_0$ cannot be touched from above}}\\&
{\mbox{by a ball of radius~$r$
at any point~$X=(x,h_0(x))$ with~$|x|<x_r$.}} \end{split}
\end{equation}
Indeed, suppose by contradiction that
the graph of~$h_0$ is touched from above by~$B_r(\underline p_1,\underline p_2)$
at a point~$\underline X=(\underline x,h_0(\underline x))$ with~$0< \underline x<x_r$.
Since~$h_0$ is even, it follows that~$B_r(-\underline p_1,\underline p_2)$
is also contained in the supergraph of~$h_0$. Hence, by convexity,
the ball~$B_r(0,\underline p_2)$
is contained in the supergraph of~$h_0$. Dropping down such ball, we obtain
a touching point~$\hat X=(\hat x,h_0(\hat x))$ with~$0<\hat x<x_r$,
thus producing a contradiction with~\eqref{UNIQUE}
and completing the proof of~\eqref{NPD} (see Figure~\ref{lala2} for a sketch
of this argument).

To complete the proof of Theorem~\ref{TW},
we now check that the translating supergraph of~$h_\star$ is a solution
of the geometric flow in~\eqref{FLOW}. For this, since this supergraph
is convex, and~$h_\infty$ and~$h_0$ are solutions of~\eqref{VB-2}
and~\eqref{VB-3}, respectively, recalling~\eqref{NPD}
it is enough to check that 
\begin{equation}\label{CURSPx}\begin{split}
&{\mbox{the curvature of the supergraph is
smaller than~$\displaystyle\frac1r$ when~$|x|\in\left(x_r,\,\displaystyle\frac\pi2+x_r-\tilde x_r\right)$.}}\end{split}\end{equation}
To this aim, 
we observe that when~$x\in\left(x_r,\,\frac\pi2+x_r-\tilde x_r\right)$
we have that~$x+\tilde x_r-x_r\in\left(\tilde x_r,\,\frac\pi2\right)\subseteq
\left(0,\,\frac\pi2\right)$,
and thus we deduce from~\eqref{MAsIS}, \eqref{GRIM} and~\eqref{jfbbfbg} that
\begin{eqnarray*}&& \frac{h_\star''(x)}{(1+|h_\star'(x)|^2)^{3/2}}=
\frac{h_\infty''(x+\tilde x_r-x_r)}{(1+|h_\infty'(x+\tilde x_r-x_r)|^2)^{3/2}}=
|\cos(x+\tilde x_r-x_r)|\\
&&\qquad=\cos(x+\tilde x_r-x_r)\le\cos(\tilde x_r)=
\cos(\arctan h_0'(x_r))\le
\cos(\arctan\sqrt{r^2-1})=\frac1r.
\end{eqnarray*}
This establishes~\eqref{CURSPx}
when~$x\in\left(x_r,\,\frac\pi2+x_r-\tilde x_r\right)$,
and the case~$x\in\left(-\frac\pi2+\tilde x_r- x_r,\,-x_r \right)$
is symmetric. Hence, the proof of~\eqref{CURSPx}, and so
of Theorem~\ref{TW}, is complete.

\begin{remark} {\normalfont
We observe that the translating solution constructed in Theorem~\ref{TW}
is only~$C^{1,1}$ and not~$C^2$ in principle. On the other hand, such regularity
is enough to define the velocity field at any point in a continuous way,
thanks to~\eqref{VB-1}.}
\end{remark}

\begin{bibdiv}
\begin{biblist}

\bib{MR2728706}{article}{
   author={Barchiesi, M.},
   author={Kang, S. H.},
   author={Le, T. M.},
   author={Morini, M.},
   author={Ponsiglione, M.},
   title={A variational model for infinite perimeter segmentations based on
   Lipschitz level set functions: denoising while keeping finely oscillatory
   boundaries},
   journal={Multiscale Model. Simul.},
   volume={8},
   date={2010},
   number={5},
   pages={1715--1741},
   issn={1540-3459},
   review={\MR{2728706}},
   doi={10.1137/090773659},
}

\bib{MR1467354}{article}{
   author={Bellettini, Giovanni},
   author={Novaga, Matteo},
   title={Minimal barriers for geometric evolutions},
   journal={J. Differential Eqs.},
   volume={139},
   date={1997},
   number={1},
   pages={76--103},
   issn={0022-0396},
   review={\MR{1467354}},
}

\bib{2017arXiv170403195C}{article}{
   author={Cesaroni, Annalisa},
   author={Dipierro, Serena},
   author={Novaga, Matteo},
   author={Valdinoci, Enrico},
   title={Minimizers for nonlocal perimeters of Minkowski type},
   journal={Calc. Var. Partial Differential Equations},
   volume={57},
   date={2018},
   number={2},
   pages={57:64},
   issn={0944-2669},
   review={\MR{3775182}},
   doi={10.1007/s00526-018-1335-9},
}

\bib{MR2863468}{article}{
   author={Cesaroni, Annalisa},
   author={Novaga, Matteo},
   author={Valdinoci, Enrico},
   title={Curve shortening flow in heterogeneous media},
   journal={Interfaces Free Bound.},
   volume={13},
   date={2011},
   number={4},
   pages={485--505},
   issn={1463-9963},
   review={\MR{2863468}},
   doi={10.4171/IFB/269},
}

\bib{MR2655948}{article}{
   author={Chambolle, Antonin},
   author={Giacomini, Alessandro},
   author={Lussardi, Luca},
   title={Continuous limits of discrete perimeters},
   journal={M2AN Math. Model. Numer. Anal.},
   volume={44},
   date={2010},
   number={2},
   pages={207--230},
   issn={0764-583X},
   review={\MR{2655948}},
   doi={10.1051/m2an/2009044},
}

\bib{2016arXiv160307239C}{article}{
   author = {{Chambolle}, Antonin},
   author = {{Novaga}, Matteo},
   author = {{Ruffini}, Berardo},
    title = {Some results on anisotropic fractional mean curvature flows},
  journal = {Interfaces Free Bound.},
volume={19},
   date={2017},
   number={3},
   pages={393-415},
   issn={1463-9963},
   review={},
   doi={10.4171/IFB/387},
}

\bib{MR3023439}{article}{
   author={Chambolle, Antonin},
   author={Morini, Massimiliano},
   author={Ponsiglione, Marcello},
   title={A nonlocal mean curvature flow and its semi-implicit time-discrete
   approximation},
   journal={SIAM J. Math. Anal.},
   volume={44},
   date={2012},
   number={6},
   pages={4048--4077},
   issn={0036-1410},
   review={\MR{3023439}},
   doi={10.1137/120863587},
}

\bib{MR3156889}{article}{
   author={Chambolle, Antonin},
   author={Morini, Massimiliano},
   author={Ponsiglione, Marcello},
   title={Minimizing movements and level set approaches to nonlocal
   variational geometric flows},
   conference={
      title={Geometric partial differential equations},
   },
   book={
      series={CRM Series},
      volume={15},
      publisher={Ed. Norm., Pisa},
   },
   date={2013},
   pages={93--104},
   review={\MR{3156889}},
   doi={10.1007/978-88-7642-473-1\_4},
}

\bib{MR3401008}{article}{
   author={Chambolle, Antonin},
   author={Morini, Massimiliano},
   author={Ponsiglione, Marcello},
   title={Nonlocal curvature flows},
   journal={Arch. Ration. Mech. Anal.},
   volume={218},
   date={2015},
   number={3},
   pages={1263--1329},
   issn={0003-9527},
   review={\MR{3401008}},
   doi={10.1007/s00205-015-0880-z},
}

\bib{2016arXiv160708032C}{article}{
   author={Cinti, Eleonora},
   author={Sinestrari, Carlo},
   author={Valdinoci, Enrico},
   title={Neckpinch singularities in fractional mean curvature flows},
   journal={Proc. Amer. Math. Soc.},
   volume={146},
   date={2018},
   number={6},
   pages={2637--2646},
   issn={0002-9939},
   review={\MR{3778164}},
   doi={10.1090/proc/14002},
}

\bib{MR840401}{article}{
   author={Gage, M.},
   author={Hamilton, R. S.},
   title={The heat equation shrinking convex plane curves},
   journal={J. Differential Geom.},
   volume={23},
   date={1986},
   number={1},
   pages={69--96},
   issn={0022-040X},
   review={\MR{840401}},
}

\bib{MR906392}{article}{
   author={Grayson, Matthew A.},
   title={The heat equation shrinks embedded plane curves to round points},
   journal={J. Differential Geom.},
   volume={26},
   date={1987},
   number={2},
   pages={285--314},
   issn={0022-040X},
   review={\MR{906392}},
}

\bib{MR772132}{article}{
   author={Huisken, Gerhard},
   title={Flow by mean curvature of convex surfaces into spheres},
   journal={J. Differential Geom.},
   volume={20},
   date={1984},
   number={1},
   pages={237--266},
   issn={0022-040X},
   review={\MR{772132}},
}

\bib{MR1216585}{article}{
   author={Ilmanen, Tom},
   title={The level-set flow on a manifold},
   conference={
      title={Differential geometry: partial differential equations on
      manifolds},
      address={Los Angeles, CA},
      date={1990},
   },
   book={
      series={Proc. Sympos. Pure Math.},
      volume={54},
      publisher={Amer. Math. Soc., Providence, RI},
   },
   date={1993},
   pages={193--204},
   review={\MR{1216585}},
   doi={10.1090/pspum/054.1/1216585},
}

\bib{2015arXiv151106944S}{article}{
   author = {{S{\'a}ez}, Mariel},
   author = {Valdinoci, Enrico},
    title = {On the evolution by fractional mean curvature},
  journal = {Comm. Anal. Geom.},
   volume={27},
   date = {2019},
    number={1},
}

\end{biblist}
\end{bibdiv}

\vfill

{\footnotesize

\noindent {\em Addresses:} \\

Serena Dipierro \& Enrico Valdinoci.
Department of Mathematics
and Statistics,
University of Western Australia,
35 Stirling Hwy, Crawley WA 6009, Australia,
and
Dipartimento di Matematica, Universit\`a di Milano,
Via Saldini 50, 20133 Milan, Italy.

Matteo Novaga.
Dipartimento di Matematica,
Universit\`a di Pisa, 
Largo B. Pontecorvo 5, 56127 Pisa,
Italy.\\

Enrico Valdinoci.
School of Mathematics
and Statistics,
University of Melbourne, 813 Swanston St,
Parkville VIC 3010, Australia, and
IMATI-CNR, Via Ferrata 1, 27100 Pavia,
Italy. \\
\smallskip

\noindent{\em Emails:}\\

{serena.dipierro@unimi.it},
{matteo.novaga@unipi.it}, {enrico@mat.uniroma3.it}  

}

\end{document}